\theoremstyle{plain}      \newtheorem{proposition}{Proposition}
\theoremstyle{plain}      \newtheorem{theorem}{Theorem}
\theoremstyle{remark}     \newtheorem{example}{Example}
\title[Projective distance and $g$-measures]{Projective distance and $g$-measures}
\author{L. Trejo-Valencia}
\author{E. Ugalde}
\address{Instituto de F\'isica, Universidad Aut\'onoma de
San Luis Potos\'i, Avenida Manuel Nava 6, Zona Universitaria,
78290 San Luis Potos\'\i , M\'exico.}
\date{\today}
\begin{document}

\begin{abstract}
We introduce a distance in the space of fully-supported probability measures on 
one-dimensional symbolic spaces. 
We compare this distance to the $\bar{d}$-distance and we prove that 
in general they are not comparable. Our projective distance is inspired on Hilbert's 
projective metric,  and in the framework of $g$-measures, it allows to assess the continuity 
of the entropy at $g$-measures satisfying uniqueness. It also allows to relate the speed 
of convergence and the regularity of sequences of locally finite $g$-functions, to the
preservation at the limit, of certain ergodic properties for the associate $g$-measures. 
\end{abstract}

\maketitle

\bigskip

\section{Introduction.}\label{sec:introduction}

\subsection{} In~\cite{Hilbert1885} Hilbert introduced the so called projective distance,
for which the geodesic are precisely the straight lines. It was later used by G. Birkhoff
to prove the existence and uniqueness of positive eigenvectors for positive linear 
transformations on Banach spaces~\cite{Birkhoff1957}.
Birkhoff's strategy goes as follows: uniformly positive bounded linear transformations
map the positive cone of a Banach space into itself. This transformation is 
non-expansive with respect to the projective distance, and if the image cone has finite
diameter, then the transformation is a projective contraction. In this case Banch's 
fixed point Theorem ensures the existence and uniqueness of a projective fixed point for
the linear transformation, and projective fixed points are nothing but positive
eigenvectors. Furthermore, the contractiveness ensures that the iterations of the linear 
transformation on any positive vector converge exponentially fast, in the projective 
sense, towards to the fixed point.
Birkhoff's strategy has been successfully employed in the solution of a variety of 
problems, in particular to prove existence and uniqueness of invariant measures, 
and the exponential decay of correlations of convenient observables.
This has been done for symbolic 
systems~\cite{Ferrero1979, MaumeDeschamps2001b}, for suitable one-dimensional
maps~\cite{Liverani1995b, Liverani1998}, and for general maps with some degree of 
hyperbolicity~\cite{Liverani1995a, MaumeDeschamps2001a}.

\medskip\noindent Ornstein's $\bar{d}$-distance was introduced 
in~\cite{Ornstein1973} to give a topological characterization to the Bernoulli processes. 
This distance generates a topological structure well adapted to the study of
important ergodic properties. For instance, $\bar{d}$-limits of sequences of mixing 
processes are mixing, the class of Bernoulli processes is $\bar{d}$-closed, as well 
as the class of $K$-processes. Bressaud and coauthors, in a study of Markov 
approximation to $g$-measures (chains of complete connection in their nomenclature),
found an upper bound for the speed of $\bar{d}$-convergence of the approximations 
related to the regularity of the $g$-function~\cite{Bressaud1999b}. In a related 
work~\cite{Coelho1998}, Coelho and Quas studied the $\bar{d}$-continuity of 
$g$-measures with respect to the uniform distance between $g$-functions.

\subsection{} In~\cite{Chazottes2005a} we stablished a relation between the rate of  
projective-convergence of the Markovian approximations of a one-dimensional 
Gibbs measures and the decay of correlations of the limiting Gibbs measure.  The 
result extends straightforwardly to the case on $g$-measures defined by sufficiently 
regular $g$-functions. Our technique relies on a projective comparison of the 
marginals of the approximating measures. If the potential defining the Gibbs measure
is sufficiently regular, then the finite range approximations are  sufficiently similar 
``in the projective sense'', and in this case the mixing rate of the Gibbs measure can 
be upper bounded by a function of the mixing rates of the approximations. 
Additionally, in this fast approximation regime, the entropy of the approximations
converges toward the entropy of the Gibbs measure. Furthermore, since in that case 
the relative entropy of the limiting Gibbs measure with respect to the approximations
goes to cero, then Marton's bounds~\cite{Marton1996, Marton1998} ensures the 
convergence of the approximations in $\bar{d}$-distance. In a recent 
work~\cite{Maldonado2013}, Maldonado and Salgado applied our approach to study
the approximability of Gibbs measure for two-body interactions in one dimensional 
symbolic systems. 
This technique was also used in our study of the preservation of Gibbsianness under
amalgamation of symbols~\cite{Chazottes2011a}.

\subsection{}  Despite its actual and potential applications, our notion of  
``projective convergence'' has not yet been formalized, neither its relation to 
$\bar{d}$-converges or vague convergence has been established. 
The aim of this paper is to fill this gap and to explore to which extent the 
projective convergence 
as we define it, is well adapted to study particular classes of processes. 
We consider in particular the class of $g$-measures, 
leaving for a forthcoming work the study of measures obtained by random 
substitutions for which we already have some preliminary results.
The rest of the paper is organized as follows.
The next section is devoted to the study of some general properties of the 
projective distance, particularly its relation to the vague distance and the 
$\bar{d}$-distance. In Section~\ref{sec:g-measures} we study the convergence 
of Markov approximations to a $g$-measure, the continuity of the entropy at 
$g$-measures satisfying uniqueness, and we establish a criterion for uniqueness 
based on the speed of convergence and regularity of Markov approximations.   
Section~\ref{sec:conclusions} contains some concluding remark and and perspectives.

\medskip\noindent
\subsection{\bf Acknowledgements} This work was supported by the
Mexican Government through CONACyT grant CB-2009-01-129072. It was also 
partially supported by Universidad Aut\'onoma de San Luis Potos\'{\i}, via grant 
C14-FAI-04-33.33. 
We thank {\it Laboratorio Internacional Solomon Lefschetz} the financing 
of our academic exchange with Professor Chazottes from Ecole Polytechnique.
L. Trejo-Valencia is supported by CONACyT through the Ph. D. Fellowship
332432. 
\section{Projective Distance}
\label{sec:projective-distance}

\subsection{}
Let  $A$ be a finite set, which we also called alphabet, and let $X:=A^\mathbb{N}$ 
the set of infinite $A$-valued sequences. As usual, the elements of $A$ will be called 
symbols and words the finite tuples in $A$.
Given $\pmb{x}=x_1x_2\cdots \in A^\mathbb{N}$ and natural numbers
$1\leq n\leq m$,  $\pmb{x}_n^m$ denote the word $x_n x_{n+1} \dots x_{m-1} x_m$. 
The left shift $T:A^\mathbb{N}\to A^\mathbb{N}$ is such that $(T\pmb {x})_i=x_{i+1}$ 
for all $i\in\mathbb{N}$. The pair $(X,T)$ is the full shift on the alphabet $A$.

\medskip \noindent To a word $\pmb{a}\in A^n, n\in\mathbb{N}$, we associate
the cylinder set $[\pmb{a}]:=\{\pmb{x}\in A^\mathbb{N}\: : \pmb{x}_1^n=\pmb{a}\}$.
Cylinder sets are clopen in the standard Tychonoff topology and generate
the corresponding Borel $\sigma$-algebra $\mathcal{B}(X)$. 
We denote by $\mathcal{M}(X)$ the set of all Borel probability measures on $X$ 
and by $\mathcal {M}_T(X)$ the subset of $T$-invariant probability measures. 
Both $\mathcal {M}(X)$ and $\mathcal {M}_T(X)$ are compact convex sets in vague 
topology. The vague topology can be metrized by the distance
\begin{equation}\label{eq:vague}
D(\mu,\nu):=\sum_{n\in\mathbb{N}} 2^{-n}
        \left(\sum_{\pmb{a}\in A^n}|\mu[\pmb {a}]-\nu[\pmb{a}]|\right)
\end{equation}
It is known that $\mathcal{M}(X)$ as well as $\mathcal{M}_T(X)$ are convex set, 
complete and separable in the vague topology. Furthermore, they have the structure 
of a simplex, which, in the case of $\mathcal{M}_T(X)$ implies the uniqueness of the 
ergodic decomposition~\cite{Denker}.

\medskip\noindent Given $\mu,\nu\in \mathcal{M}(X)$, a coupling between $\mu$ and 
$\nu$ is a measure $\lambda\in\mathcal{M}((A\times A)^{\mathbb{N}})$ such that for 
all $n\in\mathbb{N}$,
\[
\sum_{\pmb{b}\in A^n}\lambda[\pmb{a} \times \pmb{b}]=\mu[\pmb{a}],\
 \sum_{\pmb{a}\in A^n}\lambda[\pmb{a} \times \pmb{b}]=\nu[\pmb{b}].
 \]
Here $\pmb{a} \times \pmb{b}=(a_1b_1)(a_2b_2)\cdots (a_nb_n)\in (A\times A)^n$,
for each $\pmb{a},\pmb{b}\in A^n$. 
With $J(\mu,\nu)\subset\mathcal{M}((A\times A)^{\mathbb{N}})$ we denote the set of 
all couplings between $\mu$ and $\nu$. Ornstein's $\bar{d}$-distance is given by
\begin{equation}\label{eq:dbar}
\bar{d}(\mu,\nu)=\inf_{\lambda\in J(\mu,\nu)}
\limsup_{n\to\infty}\frac{1}{n}\sum_{k=0}^{n-1} \lambda(T^{-k}\bar{\Delta}),
\end{equation}
where $\bar{\Delta}=\{ab\in A\times A: a\neq b\}$ is the complement of the diagonal. 
Distance $\bar{d}$ makes $\mathcal{M}(X)$ a complete but non-separable topological 
space. The same holds when $\bar{d}$ is restricted to the subspace of $T$-invariant 
measures $\mathcal{M}_T(X)$ (see~\cite{Shields} for instance).

\subsection{}
Let $\mathcal{M}^+(X)\subset \mathcal{M}(X)$ be the set of fully-supported Borel 
probability measures on $X$, {\em i.e.}, $\mu\in \mathcal{M}^+(X)$ if and only if 
$\mu[\pmb{a}]>0$ for all $\pmb{a}\in\cup_{n\in\mathbb{N}} A^n$.
We define $\rho:\mathcal{M}^+(X)\times\mathcal{M}^+(X)\to\mathbb{R}^+$
by
\begin{equation}\label{eq:projective}
\rho(\mu,\nu)=\sup_{n\in {\mathbb N}}
\max_{\pmb{a}\in A^n}\frac{1}{n}\,\left|\log\frac{\mu[\pmb a]}{\nu[\pmb a]}\right|.
\end{equation}

\noindent
The function $\rho$ defines a distance on $\mathcal{M}^+(X)$ which we call 
projective distance.

\begin{theorem}\label{theo:completeness}
$\mathcal{M}^+(X)$ is a complete metric space with respect to $\rho$.
\end{theorem}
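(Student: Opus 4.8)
The plan is to verify that $\rho$ is indeed a metric, and then establish completeness by extracting the limit measure cylinder-by-cylinder from the marginals of a Cauchy sequence. First I would check the metric axioms. Symmetry and non-negativity are immediate from the absolute value in~\eqref{eq:projective}. For the identity of indiscernibles, $\rho(\mu,\nu)=0$ forces $\mu[\pmb a]=\nu[\pmb a]$ for every word $\pmb a$, and since cylinders generate $\mathcal{B}(X)$ this gives $\mu=\nu$. The triangle inequality follows from the triangle inequality for $|\log(s/t)|$ on $\mathbb{R}^+$ applied term by term, then taking suprema. One should note that $\rho$ may take the value $+\infty$ (e.g. when the ratios $\mu[\pmb a]/\nu[\pmb a]$ are unbounded after the $n$-th root), so strictly one works with an extended metric; this does not affect the Cauchy/completeness argument since Cauchy sequences are eventually within finite $\rho$-distance of each other.

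Next, for completeness, let $(\mu_k)_{k\in\mathbb{N}}$ be a $\rho$-Cauchy sequence in $\mathcal{M}^+(X)$. Fix $n\in\mathbb{N}$ and $\pmb a\in A^n$. From~\eqref{eq:projective}, $\rho(\mu_k,\mu_\ell)\geq \frac{1}{n}\left|\log(\mu_k[\pmb a]/\mu_\ell[\pmb a])\right|$, so for each fixed $n$ the sequence $\bigl(\log\mu_k[\pmb a]\bigr)_k$ is Cauchy in $\mathbb{R}$; hence $\mu_k[\pmb a]$ converges to some limit $m(\pmb a)\in(0,\infty]$ — and in fact $m(\pmb a)\le 1<\infty$ since each $\mu_k[\pmb a]\le 1$, and $m(\pmb a)>0$ because $\log\mu_k[\pmb a]$ has a finite limit. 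The candidate limit is the set function $\pmb a\mapsto m(\pmb a)$. I would then check it is a consistent family of cylinder probabilities: the normalization $\sum_{a\in A} m(a)=1$ and the Kolmogorov consistency relations $\sum_{b\in A} m(\pmb a b)=m(\pmb a)$ pass to the limit from the corresponding finite identities for each $\mu_k$ (these are finite sums, so pointwise convergence suffices). By the Kolmogorov extension theorem (equivalently, by Carathéodory on the clopen algebra of cylinders, which is standard on $A^\mathbb{N}$), there is a unique $\mu\in\mathcal{M}(X)$ with $\mu[\pmb a]=m(\pmb a)$ for all words $\pmb a$. Since $m(\pmb a)>0$ for every $\pmb a$, we have $\mu\in\mathcal{M}^+(X)$.

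It remains to show $\rho(\mu_k,\mu)\to 0$, and this is the main obstacle: pointwise convergence of $\mu_k[\pmb a]\to\mu[\pmb a]$ for each word is far weaker than $\rho$-convergence, because $\rho$ involves a supremum over all $n$, i.e. over infinitely many words simultaneously, with the $1/n$ normalization. The key is to use the Cauchy property \emph{uniformly} rather than the pointwise limits. Given $\varepsilon>0$, choose $N$ so that $\rho(\mu_k,\mu_\ell)<\varepsilon$ for all $k,\ell\ge N$; this means $\frac{1}{n}\left|\log(\mu_k[\pmb a]/\mu_\ell[\pmb a])\right|<\varepsilon$ for \emph{every} $n$ and \emph{every} $\pmb a\in A^n$ simultaneously. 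Fixing $k\ge N$ and $\pmb a$, let $\ell\to\infty$: since $\mu_\ell[\pmb a]\to\mu[\pmb a]$ with a strictly positive finite limit, $\log(\mu_k[\pmb a]/\mu_\ell[\pmb a])\to\log(\mu_k[\pmb a]/\mu[\pmb a])$, whence $\frac{1}{n}\left|\log(\mu_k[\pmb a]/\mu[\pmb a])\right|\le\varepsilon$. This bound holds for all $n$ and all $\pmb a\in A^n$ with the \emph{same} $\varepsilon$, so taking the sup over $n$ and the max over $\pmb a\in A^n$ gives $\rho(\mu_k,\mu)\le\varepsilon$ for all $k\ge N$. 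Hence $\mu_k\to\mu$ in $\rho$, and $\mathcal{M}^+(X)$ is complete. The subtlety to be careful about is precisely the order of quantifiers: one must take the limit in $\ell$ \emph{after} fixing the word $\pmb a$ and the index $n$, but \emph{before} taking the supremum over $n$; since the Cauchy bound $\varepsilon$ is uniform in $n$ and $\pmb a$, this is legitimate.
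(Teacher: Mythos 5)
Your proof is correct, and it differs from the paper's in how the candidate limit measure is produced. The paper first derives the comparison inequality $D(\mu,\nu)<\tfrac{4}{3}\rho(\mu,\nu)$ (valid when $\rho(\mu,\nu)<\log 2$), concludes that a $\rho$-Cauchy sequence is Cauchy for the vague metric $D$, and obtains the limit $\mu$ from the completeness of $\mathcal{M}(X)$ in the vague topology; it then checks full support from the Cauchy bound and upgrades to $\rho$-convergence exactly as you do, by freezing a word, passing to the limit in the second index, and only then taking the supremum over $n$. You instead bypass the vague topology: the bound $\frac{1}{n}\left|\log(\mu_k[\pmb a]/\mu_\ell[\pmb a])\right|\le\rho(\mu_k,\mu_\ell)$ gives cylinder-wise convergence to a strictly positive, consistent, normalized family $m(\pmb a)$, and Kolmogorov extension (or Carath\'eodory on the clopen algebra) yields $\mu\in\mathcal{M}^+(X)$ directly. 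Your route is self-contained modulo the extension theorem and makes positivity of the limit immediate; the paper's route costs the comparison inequality but gets as a byproduct the fact that the projective topology is finer than the vague one, which it uses again later (e.g.\ in the entropy-continuity argument). Your remark that $\rho$ is a priori an extended metric (it can take the value $+\infty$ on $\mathcal{M}^+(X)$, since cylinder probabilities need not decay at exponentially comparable rates) is a legitimate point the paper passes over silently, and you handle it correctly since the Cauchy and convergence statements only involve eventually finite distances. The final quantifier-order argument for $\rho(\mu_k,\mu)\le\varepsilon$ is the same in both proofs and is stated correctly in yours.
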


\begin{proof}
Let us first verify that $\rho$ defines a metric. Clearly  $\rho(\mu,\nu)\geq0$ for all 
$\mu, \nu\in\mathcal{M}^+(X)$, and $\rho(\mu,\nu)=0$ if and only if and only if 
$\mu[\pmb{a}]=\nu[\pmb{a}]$ for all $n\in\mathbb N$ and $\pmb{a}\in A^n$ which 
readily implies $\mu=\nu$. Now, since for all $n\in\mathbb N$ and $\pmb{a}\in A^n$ 
and each $\lambda\in \mathcal{M}^+(X)$ we have
\[
\left|\log\frac{\mu[\pmb{a}]}{\nu[\pmb{a}]}\right|
   =\left|\log\frac{\mu[\pmb{a}]\lambda[\pmb{a}]}{\nu[\pmb{a}]\lambda[\pmb{a}]}\right|
   =\left|\log\frac{\mu[\pmb{a}]}{\lambda[\pmb{a}]} +
                                      \log\frac{\lambda[\pmb{a}]}{\nu[\pmb{a}]}\right|
   \leq\left|\log\frac{\mu[\pmb{a}]}{\lambda[\pmb{a}]}\right|+
                                \left|\log\frac{\lambda[\pmb{a}]}{\nu[\pmb{a}]}\right|,
\]
then $\rho(\mu,\nu)\leq\rho(\mu,\lambda)+\rho(\lambda,\nu)$ for all
$\mu,\lambda,\nu\in \mathcal{M}^+(X)$.

\medskip \noindent
Let $\mu,\nu\in\mathcal{M}^+(X)$ be such that $\rho(\mu,\nu) < \log(2)$, then all 
$n\in\mathbb N$ and $\pmb{a}\in A^n$ we have
$e^{-n\rho(\mu,\nu)}\nu[\pmb{a}] < \mu[\pmb{a}] <e^{n\rho(\mu,\nu)}\nu[\pmb{a}]$, 
which implies $|\mu[\pmb{a}]-\nu[\pmb{a}]| < (e^{n\rho(\mu,\nu)}-1)\nu[\pmb{a}]$, and
from this
\begin{equation}\label{eq:compare-vage}
D(\mu,\nu) < \sum_{n\in\mathbb{N}} 2^{-n}(e^{n\rho(\mu,\nu)}-1)=
          2\frac{e^{\rho(\mu,\nu)}-1}{2-e^{\rho(\mu,\nu)}} < \frac{4}{3}\rho(\mu,\nu).
\end{equation}
With this we prove that the vague topology is weaker than the one induced by $\rho$.

\medskip\noindent
Let us now prove that $\mathcal{M}^+(X)$ is complete with respect to the distance 
$\rho$. For this let $\{\mu_m\}_{m\in\mathbb{N}}$ be a Cauchy sequence with respect 
to $\rho$, which is a Cauchy sequence respect to $D$ as well. Since $D$ makes 
$\mathcal{M}(X)$ a complete space, then there exists $\mu\in \mathcal{M}(X)$ 
towards which $\{\mu_m\}_{m\in\mathbb{N}}$ converges.
Now, for each $n\in \mathbb{N}$, $\pmb{a}\in A^n$ and every $m\in \mathbb{N}$,
we have $e^{-n\rho(\mu_m,\mu_1)}\mu_1[\pmb{a}] \leq \mu_m[\pmb{a}]$, therefore
\[
\mu[\pmb{a}]=\lim_{m\to\infty}\mu_m[\pmb{a}] 
               \leq \mu_1[\pmb{a}] e^{-n\sup_{m\in\mathbb{N}} \rho(\mu_1,\mu_m)}>0,
\]
which proves that $\mu\in \mathcal{M}^+(X)$. Finally, since 
$\mu[\pmb{a}]=\lim_{m\to\infty}\mu_m[\pmb{a}]$, we have
\[
e^{-n\sup_{m\geq m_0}\rho(\mu_m,\mu_{m_0})}
                             \leq  \frac{\mu[\pmb{a}]}{\mu_m[\pmb{a}]} 
                             \leq e^{n\sup_{m\geq m_0}\rho(\mu_m,\mu_{m_0})}
\]
for each $n\in \mathbb{N}$, $\pmb{a}\in A^n$ and $m_0\in\mathbb{N}$. From this it
follows that
\[
\rho(\mu,\mu_{m_0})\leq \sup_{m\geq m_0}\rho(\mu_m,\mu_{m_0}),
\]
which proves that $\mu$ is the limit of $\{\mu_m\}_{m\in\mathbb{N}}$ in the projective
distance.
\end{proof}

\bigskip\noindent
As mentioned above, $\mathcal{M}(X)$ is separable in the vague topology while it is 
non-separable with respect to the topology induced by $\bar{d}$. In this respect, 
regarding the projective distance we have the following.

\begin{theorem}\label{theo:separability}
$\mathcal{M}^{+}(X)$ is non-separable with respect to $\rho$.
\end{theorem}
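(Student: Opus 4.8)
**The plan is to exhibit an uncountable family of measures in $\mathcal{M}^+(X)$ that are pairwise $\rho$-separated by a fixed positive distance.** The cleanest construction uses Bernoulli (i.i.d.) measures. Fix two symbols, say $0,1\in A$ (we may assume $|A|\geq 2$, otherwise $X$ is a single point and there is nothing to prove). For each $p\in(0,1)$ let $\mu_p$ be the Bernoulli measure on $X$ assigning probability $p$ to the symbol $0$, probability $1-p$ to the symbol $1$, and, to make it fully supported, spreading some fixed small mass uniformly over the remaining symbols — or more simply, when $|A|=2$, just $\mu_p[0]=p$, $\mu_p[1]=1-p$. Then each $\mu_p\in\mathcal{M}^+(X)$, and for the cylinder $[\underbrace{0\cdots0}_{n}]$ we have $\mu_p[\underbrace{0\cdots0}_{n}]=p^n$, so
\[
\frac{1}{n}\left|\log\frac{\mu_p[0^n]}{\mu_q[0^n]}\right|
  =\frac{1}{n}\left|\log\frac{p^n}{q^n}\right|=\left|\log p-\log q\right|.
\]
Hence $\rho(\mu_p,\mu_q)\geq|\log p-\log q|$, which is strictly positive whenever $p\neq q$.

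From here the conclusion is immediate: suppose, for contradiction, that $\mathcal{M}^+(X)$ had a countable $\rho$-dense subset $\mathcal{D}$. For each $p\in(0,1)$ pick $\nu_p\in\mathcal{D}$ with $\rho(\mu_p,\nu_p)<\tfrac14$. If $p,q\in(1/2,e^{-1/2})$ — an interval of positive length — are chosen so that $|\log p-\log q|\geq\tfrac12$ forces them apart, then by the triangle inequality
\[
\rho(\nu_p,\nu_q)\geq\rho(\mu_p,\mu_q)-\rho(\mu_p,\nu_p)-\rho(\mu_q,\nu_q)>\tfrac12-\tfrac14-\tfrac14=0,
\]
so distinct $p$ give distinct $\nu_p$; more carefully, one restricts $p$ to a subset $S\subset(0,1)$ on which $p\mapsto\log p$ has all pairwise gaps at least $\tfrac12$ — but such an $S$ is necessarily finite, so instead one should argue that the map $p\mapsto\nu_p$ is injective on the uncountable set where the $\mu_p$ are pairwise more than $\tfrac12$ apart. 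The genuinely clean way: the open $\rho$-balls $B(\mu_p,\varepsilon)$ of radius $\varepsilon=\tfrac14\inf\{|\log p-\log q|\}$ are pairwise disjoint only over a discrete set of $p$'s, so instead take $\varepsilon$ small and note that for an uncountable set of parameters the balls $\{B(\mu_p,\varepsilon)\}$ cannot all contain a point of a countable set unless two share a point, contradicting disjointness once the parameters are $2\varepsilon$-separated. Thus the standard lemma applies: a metric space containing an uncountable $\delta$-separated set for some $\delta>0$ is non-separable, and we produce such a set by taking any uncountable $S\subset(0,1)$ with pairwise $|\log p-\log q|\geq\delta$.

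**The one real point requiring care is the separated family itself:** we need uncountably many parameters that are pairwise at $\rho$-distance bounded below by a single constant $\delta>0$. Since $|\log p-\log q|$ can be arbitrarily small, no uncountable subset of $(0,1)$ is uniformly $\log$-separated; so the Bernoulli family as parametrized does not directly give a uniformly separated uncountable set. The fix is to use a richer family. For each subset $E\subseteq\mathbb{N}$ (there are uncountably many), define a measure $\mu_E$ that is an inhomogeneous product: the $i$-th coordinate is i.i.d.-like with $\mu_E$-probability of symbol $0$ equal to $a$ if $i\in E$ and $b$ if $i\notin E$, for two fixed constants $a\neq b$ in $(0,1)$. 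Then for a cylinder supported on coordinates $1,\dots,n$ forcing all those coordinates to be $0$, the probability is $a^{|E\cap\{1,\dots,n\}|}b^{n-|E\cap\{1,\dots,n\}|}$, and comparing $\mu_E$ with $\mu_F$ along coordinates where $E$ and $F$ differ yields, choosing $n$ in a block where the symmetric difference has positive density, a lower bound $\rho(\mu_E,\mu_F)\geq c|\log a-\log b|$ for a universal $c>0$ whenever $E\neq F$ and their symmetric difference has positive upper density. Restricting to an uncountable almost-disjoint family $\{E_\alpha\}$ of subsets of $\mathbb{N}$ each of density $1/2$ with pairwise symmetric differences of density $\geq 1/2$ — such families exist — gives an uncountable $\rho$-separated set, and the non-separability follows. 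I expect the only mild obstacle is bookkeeping the index $n$ so that the relevant symmetric-difference density is realized, which is routine; the structural idea — trading the continuum $(0,1)$ for the continuum $2^{\mathbb{N}}$ to get uniform separation — is the crux.
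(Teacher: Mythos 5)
Your overall strategy --- produce an uncountable, uniformly $\rho$-separated family and invoke the standard fact that such a family rules out separability --- is exactly the paper's strategy (the paper builds measures indexed by $\pmb{x}\in\{0,1\}^{\mathbb N}$ with pairwise distance at least $1/2$). Your execution, however, has two genuine gaps. First, the separation estimate you actually compute uses only the all-zeros cylinder, whose log-ratio equals $\bigl(|E\cap\{1,\dots,n\}|-|F\cap\{1,\dots,n\}|\bigr)\log(a/b)$; this is governed by the difference of the counting functions, not by the density of $E\triangle F$. For two sets of equal density --- in particular any two members of the family you propose, all of density $1/2$, whose counting functions can agree up to $o(n)$ (think of the evens versus the odds) --- this cylinder yields separation tending to $0$, so no uniform lower bound follows. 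To get $\rho(\mu_E,\mu_F)\geq c\cdot\overline{\mathrm{dens}}(E\triangle F)$ you must test against a word adapted to the symmetric difference, e.g.\ symbol $0$ on $E\setminus F$ and symbol $1$ on $F\setminus E$, so that every coordinate of $E\triangle F$ contributes at least $\min\{\log(a/b),\,\log((1-b)/(1-a))\}>0$. This adapted choice is the missing step; it plays the role that the paper's tailor-made weights $\nu_{\pmb x}$ play, where the single cylinder $[\pmb{x}_1^n]$ already detects the first disagreement with exponential strength.

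Second, the combinatorial object you invoke does not exist as stated: pairwise almost-disjoint subsets of $\mathbb N$ have lower densities summing to at most $1$, so at most two of them can have density $1/2$; there is no uncountable such family. What you actually need is weaker --- an uncountable family whose pairwise symmetric differences have upper density bounded below by a fixed constant --- and such families do exist (for instance, with blocks $B_k=[2^k,2^{k+1})$ set $E_{\pmb x}=\bigcup_{k:\,x_k=1}B_k$ and keep uncountably many $\pmb{x}\in\{0,1\}^{\mathbb N}$ pairwise differing in infinitely many coordinates, e.g.\ one representative per class of the finite-difference equivalence relation), but this needs an argument and is not the object you named. With the adapted test word and a correct separated family your inhomogeneous-product construction does give a legitimate alternative to the paper's proof; as written, both the analytic estimate and the combinatorial input have holes. (The intermediate Bernoulli paragraph is internally muddled, but you correctly discard it yourself, so it does no harm.)
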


\begin{proof}
We will exhibit a collection
$\{\mu_{\pmb{x}}\in \mathcal{M}^{+}(X):\ \pmb{x}\in \{0,1\}^{\mathbb{N}}\}$,
such that $\rho(\mu_{\pmb{x}},\mu_{\pmb{y}})>1/2$ whenever $\pmb{x}\neq\pmb{y}$.

\medskip \noindent
Fix $\pmb{x}\in \{0,1\}^\mathbb{N}$, and for each $n\in\mathbb{N}$ and $\pmb{a}\in \{0,1\}^n$ let
\[
q(\pmb{a})=\max\{1\leq k \leq n:\ \pmb{a}_1^k=\pmb{x}_1^k\}+1.
\]
Now, fix $\alpha >1$ and let $\nu_{\pmb{x}}\in \mathcal{M}^{+}(\{0,1\}^{\mathbb{N}})$
be given by
\begin{equation}\label{eq:definitionnu}
\nu_{\pmb{x}}[\pmb{a}]=\left\{\begin{array}{ll}
           \alpha^n(1+\alpha)^{-n}          & \text{ if } \pmb{a}=\pmb{x}_1^n,\\
           \alpha^{q(\pmb{a})-1}(1+\alpha)^{-q(\pmb{a}) }2^{ q(\pmb{a})-n }
                                            & \text{ if } \pmb{a}\neq\pmb{x}_1^n,
                        \end{array}\right.
\end{equation}
for all $n$ and $\pmb{a}\in \{0,1\}^n$.

\medskip \noindent
Let us check that $\nu_{\pmb{x}}$ is well defined. For this notice that
\begin{eqnarray*}
\sum_{\pmb{a}\in \{0,1\}^n}\nu_{\pmb{x}}[\pmb{a}]
     &=&\nu_{\pmb{x}}[\pmb{x}_1^n]+
           \sum_{\pmb{a} \in \{0,1\}^n\setminus\{\pmb{x}_1^n\}}\nu_{\pmb{x}}[\pmb{a}],\\
     & = & \left(\frac{\alpha}{1+\alpha}\right)^n
           +\frac{1}{1+\alpha}\sum_{q=1}^{n}\left(\frac{\alpha}{1+\alpha}\right)^{q-1}
                         \frac{\#\{\pmb{a}\in \{0,1\}^n:\ q(\pmb{a})=m\}}{2^{n-q}},\\
     & = & \left(\frac{\alpha}{1+\alpha}\right)^n +\frac{1}{1+\alpha}
                    \left(\frac{1-(\alpha/(1+\alpha))^n}{1-\alpha/(1+\alpha)}\right)=1,
\end{eqnarray*}
which proves that the marginals are well normalized.
Now, if  $\pmb{a}\in A^{n}$ is such that $q(\pmb{a}) < n$, then
$q(\pmb{a}b)=q(\pmb{a})$ for all $b\in A$, and
\[
\sum_{b\in \{0,1\}}\nu_{\pmb{x}}[\pmb{a}b]=
                        \frac{\alpha^{q(\pmb{a})-1} }{ (1+\alpha)^{q(\pmb{a})} }
                        \frac{2}{2^{n+1-q(\pmb{a})}}=\nu_{\pmb{x}}[\pmb{a}].
\]
Otherwise, if $\pmb{a}=\pmb{x}_1^n$, then
\begin{eqnarray*}
\sum_{b\in A}\nu_{\pmb{x}}[\pmb{a}b]
        &=& \nu_{\pmb{x}}[\pmb{a}x_{n+1}]
                            +\sum_{b\in A\setminus\{x_{n+1}\} }\nu_{\pmb{x}}[\pmb{a}b] \\
        &=&\left(\frac{\alpha}{1+\alpha}\right)^{n+1} +\frac{\alpha^n}{(1+\alpha)^{n+1}}
                       = \left(\frac{\alpha}{1+\alpha}\right)^n = \nu_{\pmb{x}}[\pmb{a}].
\end{eqnarray*}
We have proven that the marginals are well normalized and compatible, 
which ensures that $\nu_{\pmb{x}}$ is well defined.

\medskip \noindent
For $\pmb{y}\neq\pmb{x}$ let  $m=\min\{k\in \mathbb{N}:\ y_k\neq x_k\}$. Then we have
\begin{eqnarray*}
\rho(\nu_{\pmb{x}},\nu_{\pmb{y}}) 
   &\geq& \limsup_{n\to\infty}\frac{1}{n}\left| 
   \log \frac{\nu_{\pmb{x}}[\pmb{x}_1^n]}{\nu_{\pmb{y}}[\pmb{x}_1^n]}\right|,\\
   & = & \limsup_{n\to\infty}\frac{1}{n} \log\left(
                \frac{\alpha^n(1+\alpha)^{-n} }{ \alpha^{q(\pmb{y}_1^n)-1}
                               (1+\alpha)^{-q(\pmb{y}_1^n)}2^{q(\pmb{y}_1^n)-n}}\right),\\
   &=& \lim_{n\to\infty}\frac{1}{n} \log\left(
               \frac{\alpha^n(1+\alpha)^{-n} }{ \alpha^{m-1}(1+\alpha)^{-m}2^{m-n}}\right)
                                              =\log\left(\frac{2\alpha}{1-\alpha}\right).
\end{eqnarray*}
By taking $\alpha=e^{1/2}/(2-e^{1/2})$ we obtain 
$\rho(\nu_{\pmb{x}},\nu_{\pmb{y}})\geq 1/2$ for all $\pmb{x}\neq \pmb{y}$.

\medskip\noindent Now, consider any surjective map $\pi:A\to\{0,1\}$ and for each 
$n\in\mathbb{N}$ extend it coordinatewise to $A^n$. We will denote all those 
coordinatewise extensions with the same letter $\pi$. For each 
$\pmb{x}\in \{0,1\}^{\mathbb{N}}$ the measure $\mu_{\pmb{x}}\in \mathcal{M}^{+}(X)$ 
is given by
\begin{equation}\label{eq:inducedmu}
   \mu_{\pmb{x}}[\pmb{a}]=\frac{\nu_{\pmb{x}}[\pi(\pmb{a})]}{\#\pi^{-1}(\pi(\pmb{a}))}.
\end{equation}
This measure is well defined since for each $n\in \mathbb{N}$
\[
\sum_{\pmb{a}\in A^n}\mu_{\pmb{x}}[\pmb{a}]= \sum_{\pmb{b}\in\{0,1\}^n}
                \#\pi^{-1}(\pmb{b})\frac{\nu_{\pmb{x}}[\pmb{b}]}{\#\pi^{-1}(\pmb{b})}=1,
\]
and for each $\pmb{a}\in A^n$
\begin{eqnarray*}
\sum_{a'\in A}\mu_{\pmb{x}}[\pmb{a}a']&=&
\sum_{a'\in A} \frac{\nu_{\pmb{x}}[\pi(\pmb{a})\pi(a')]}{\#\pi^{-1}(\pi(\pmb{a})\pi(a'))},\\
                                      &=&
\sum_{b\in\{0,1\}} \#\pi^{-1}(b)\frac{\nu_{\pmb{x}}[\pi(\pmb{a})b]
                                    }{\#\pi^{-1}(\pi(\pmb{a}))\,\#\pi^{-1}(b)}=
\mu_{\pmb{x}}[\pmb{a}].
\end{eqnarray*}
Now, for $\pmb{x}\neq\pmb{y}$ we have
\begin{eqnarray*}
\rho(\mu_{\pmb{x}},\mu_{\pmb{y}})
  &=& \sup_{n\in\mathbb{N}}\frac{1}{n}\max_{\pmb{a}\in A^n}
  \left|\log\frac{\mu_{\pmb{x}}[\pmb{a}]}{\mu_{\pmb{y}}[\pmb{a}]}\right|,\\
  &=&\sup_{n\in\mathbb{N}}\frac{1}{n}\max_{\pmb{a}\in A^n} \left|
 \log \frac{\nu_{\pmb{x}}[\pi(\pmb{a})]}{\nu_{\pmb{y}}[\pi(\pmb{a})]} \right|,\\
  &=&\sup_{n\in\mathbb{N}}\frac{1}{n}\max_{\pmb{b}\in \{0,1\}^n}
    \left|\log\frac{\nu_{\pmb{x}}[\pmb{b}]}{\nu_{\pmb{y}}[\pmb{b}]}\right|
                                           =\rho(\nu_{\pmb{x}},\nu_{\pmb{y}})\geq 1/2.
\end{eqnarray*}
In this way we obtain the desired uncountable collection
$\{\mu_{\pmb{x}}\in\mathcal{M}^{+}(X):\ \pmb{x}\in\{0,1\}^{\mathbb N}\}$
such that $\rho(\mu_{\pmb{x}},\mu_{\pmb{y}})\geq 1/2$ whenever 
$\pmb{x}\neq \pmb{y}$.
\end{proof}

\subsection{} According to Equation~\eqref{eq:compare-vage}, the vague topology is 
coarser than the projective topology (the one induce by $\rho$). It is well known, and 
easy to argue, that the $\bar{d}$-topology is finer than the vague topology, and it 
remains to know how to place the projective topology with respect to the $\bar{d}$-
topology. Below we will prove that $\rho$ is not coaser that $\bar{d}$. With this, and a 
construction based on $g$-measures which we will present in 
Section~\ref{sec:g-measures}, we will be able to complete the proof 
that $\rho$ and $\bar{d}$ are not comparable.

\medskip
\begin{theorem}\label{theo:notfiner}
There exists a sequence $\{\mu_p\in\mathcal{M}^{+}(X)\}_{p\in\mathbb{N}}$ 
converging in $\bar{d}$-distance, but not in the projective distance.
\end{theorem}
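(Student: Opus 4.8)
The plan is to exploit the structural difference between the two distances. The $\bar{d}$‑distance is a Cesàro‑type transportation cost: it is completely insensitive to a modification of a measure on finitely many coordinates. The projective distance, on the contrary, because of the factor $1/n$ in front of $\left|\log(\mu[\pmb a]/\nu[\pmb a])\right|$ in \eqref{eq:projective}, reacts strongly to a \emph{single} cylinder of length $n$ whose mass has been altered by a factor exponential in $n$. So I would build a sequence $\{\mu_p\}$ that deflates the mass of one carefully chosen long cylinder while leaving everything else almost untouched.

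Concretely, assuming as we may that $\#A\ge 2$, fix a symbol $0\in A$ and let $\mu$ be the uniform Bernoulli measure, $\mu[\pmb a]=(\#A)^{-n}$ for $\pmb a\in A^n$. For each $p\in\mathbb{N}$ let $\pi_p$ be the probability vector on $A^p$ obtained from the uniform one by setting $\pi_p[0^p]=(\#A)^{-p}e^{-p}$ and sharing the removed mass $(\#A)^{-p}(1-e^{-p})$ equally among the remaining $(\#A)^p-1$ words; then let $\mu_p\in\mathcal{M}(X)$ be the measure whose restriction to the first $p$ coordinates is $\pi_p$ and which is i.i.d.\ uniform beyond coordinate $p$, that is $\mu_p[\pmb a]=\pi_p[\pmb a_1^p]\,(\#A)^{-(n-p)}$ for $n\ge p$, and $\mu_p$ coincides on words of length $<p$ with the corresponding marginals of $\pi_p$. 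One checks by a routine computation that these marginals are consistent and strictly positive, so $\mu_p\in\mathcal{M}^{+}(X)$, and that $\mu_p[\pmb a]\to\mu[\pmb a]$ for every fixed word $\pmb a$, hence $\mu_p\to\mu$ in the vague distance $D$.

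For the $\bar{d}$ part I would write down an explicit coupling $\lambda\in J(\mu_p,\mu)$: couple the first $p$ coordinates through an optimal total‑variation coupling of $\pi_p$ with the uniform vector on $A^p$, and couple every coordinate beyond $p$ through the diagonal (equal symbols); note that this makes sense even though $\mu_p$ is not $T$‑invariant, since the expression in \eqref{eq:dbar} is defined for arbitrary couplings. Then $\lambda(T^{-k}\bar{\Delta})=0$ for all $k\ge p$ and $\lambda(T^{-k}\bar{\Delta})\le 1$ for $k<p$, so $\frac1n\sum_{k=0}^{n-1}\lambda(T^{-k}\bar{\Delta})\le p/n\to 0$, giving $\bar{d}(\mu_p,\mu)=0$; in particular $\mu_p\to\mu$ in the $\bar{d}$‑distance. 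For the projective part, taking $n=p$ and $\pmb a=0^p$ in \eqref{eq:projective} gives
\[
\rho(\mu_p,\mu)\ \ge\ \frac1p\left|\log\frac{\mu_p[0^p]}{\mu[0^p]}\right|\ =\ \frac1p\left|\log e^{-p}\right|\ =\ 1
\]
for every $p$. If $\{\mu_p\}$ converged in $\rho$ to some $\nu\in\mathcal{M}^{+}(X)$, then by \eqref{eq:compare-vage} it would converge to $\nu$ in $D$ as well, forcing $\nu=\mu$ by uniqueness of vague limits; this contradicts $\rho(\mu_p,\mu)\ge 1$ for all $p$. Hence $\{\mu_p\}$ does not converge projectively.

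The only point requiring care is the bookkeeping that reconciles the two opposing demands: the perturbation must be small enough, both in total mass at level $p$ and at every fixed finite level, to guarantee vague and $\bar{d}$ convergence, yet concentrated enough at the single scale $n=p$ for the $1/n$‑normalized log‑ratio to stay bounded away from $0$. The choice $\mu_p[0^p]=(\#A)^{-p}e^{-p}$ makes this conflict disappear, since the mass removed is only $O(e^{-p})$ while the log‑ratio it creates at level $p$ is exactly $p$; I would therefore spend the bulk of the proof verifying consistency and positivity of the $\mu_p$ and the vanishing of the per‑coordinate coupling cost, the rest being the two short displays above.
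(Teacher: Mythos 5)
Your proof is correct, but it takes a genuinely different route from the paper. The paper recycles the uncountable family $\{\mu_{\pmb{x}}\}$ built in the separability proof (Theorem~\ref{theo:separability}): it flips the defining sequence $\pmb{x}$ along the arithmetic progression $p\,\mathbb{N}+1$, exhibits a permutation coupling giving $\bar{d}(\mu_{\pmb{x}},\mu_{\pmb{x}_p})\leq 1/p$, and concludes non-convergence in $\rho$ because the terms of the sequence are pairwise $1/2$-separated in the projective distance (so the sequence is not even Cauchy). You instead perturb the uniform Bernoulli measure by deflating a single cylinder $[0^p]$ by the factor $e^{-p}$; the perturbation lives on finitely many coordinates, so your block-plus-diagonal coupling makes the Ces\`aro cost in \eqref{eq:dbar} vanish ($\bar{d}(\mu_p,\mu)=0$, which incidentally exposes that \eqref{eq:dbar} is only a pseudometric off the invariant class), while $\rho(\mu_p,\mu)\geq 1$ by looking at the one altered cylinder, and you rule out convergence to any \emph{other} projective limit via the comparison \eqref{eq:compare-vage} and uniqueness of vague limits --- a step the paper does not need, since uniform pairwise separation already kills Cauchyness. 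Your construction is more elementary and self-contained (no need for the $\nu_{\pmb{x}}$ machinery), at the price of resting entirely on the insensitivity of $\bar{d}$ to modifications of density zero in the coordinates (indeed of finitely many coordinates), so its witnesses are non-invariant in an essential way and sit at $\bar{d}$-distance exactly zero from the limit; the paper's sequence has strictly positive, slowly decaying $\bar{d}$-distances $1/p$ and additionally yields a single family serving both Theorems~\ref{theo:separability} and~\ref{theo:notfiner}. Both arguments, like the theorem itself, stay outside $\mathcal{M}_T(X)$, consistent with the open question raised in Section~\ref{sec:conclusions}.
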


\medskip
\begin{proof}
Let $\mu_{\pmb{x}}\in \mathcal{M}^{+}(X)$ be as in the proof of
Theorem~\ref{theo:separability}.  We will exhibit a sequence
$\{\pmb{x}_p\in\{0,1\}^{\mathbb{N}}\}_{p\in\mathbb{N}}$ such
$\{\mu_{\pmb{x}_p}\}_{p\in\mathbb{N}}$ converges with respect to $\bar{d}$.

\medskip\noindent
Fix $\pmb{x}\in \{0,1\}^\mathbb{N}$ and for each $p\in\mathbb{N}$ let
$\pmb{x}_p\in\{0,1\}^\mathbb{N}$ be such that
\[
(\pmb{x}_p)_k=\left\{\begin{array}{ll} 1-x_k & \text{ if }  k\in p\,\mathbb{N}+1,\\
                                        x_k  & \text{ if }  k\notin   p\,\mathbb{N}+1.
                     \end{array} \right.
\]
Consider the measures $\mu_{\pmb{x}_p}$ and $\mu_{\pmb{x}}$ as defined in
Equation~\eqref{eq:inducedmu}.
Let us remind that for each $\pmb{y}\in \{0,1\}$, the measure
$\mu_{\pmb{y}}\in \mathcal{M}(X)$ is induced by a corresponding measure
$\nu_{\pmb{y}}\in\mathcal{M}(\{0,1\}^{\mathbb{N}})$, defined in Equation~\eqref{eq:definitionnu},
via a projection $\pi:A\to\{0,1\}$.
Let $\tau:A\to A$ be a permutation satisfying $\tau(a)\in\pi^{-1}(1-\pi(a))$ for each 
$a\in A$ and with this, for each $n\in \mathbb{N}$ define the permutation
$\tau_p:A^n\to A^n$ such that
\[
\tau_p(\pmb{a})_k=\left\{\begin{array}{ll}\tau(a_k)&\text{ if } k\in p\,\mathbb{N}+1,\\
                                               a_k & \text{ if }  k\notin   p\,\mathbb{N}+1.
                               \end{array}\right.
\]
We will denote all those permutations with the same symbol $\tau_p$. With this we 
define the coupling $\lambda_p\in J(\mu_{\pmb{x}_p},\mu_{\pmb{x}})$ such that for 
each $\pmb{a}\times\pmb{b}\in (A\times A)^n$
\[
\lambda_p[\pmb{a}\times\pmb{b}]= \left\{ \begin{array}{ll}
         \mu_{\pmb{x}}[\pmb{a}] &\text{  if  }\pmb{b}=\tau_p(\pmb{a}) ,\\
                              0 &\text{ otherwise.} \end{array} \right.
\]
The permutation $\tau$ is designed so that 
$|a_k-x_k|=|\tau_p(\pmb{a})_k-(\pmb{x}_p)_k|$ for all $1\leq k\leq n$. This ensures 
that $\mu_{\pmb{x}}[\pmb{a}]=\mu_{\pmb{x}_p}[\tau_p(\pmb{a})]$, from which it 
follows that $\lambda_p$ is a coupling. By using this coupling we obtain

\begin{eqnarray*}
\bar{d}(\mu_{\pmb{x}},\mu_{\pmb{x}_p})
  &\leq& \limsup_{n\to\infty}\frac{1}{n}\sum_{k=1}^n\lambda_p(T^{-k}\bar{\Delta})\\
  & = &\limsup_{n\to\infty}\frac{1}{n}\sum_{k=1}^n
         \lambda_p\{\pmb{a}\times\pmb{b}\in (A\times A)^{\mathbb{N})}:\ a_k\neq b_k\}\\
  &  = & \limsup_{n\to\infty}\frac{\#(\{1,2,\ldots,n\}\cap (p\mathbb{N}+1))}{n}=\frac{1}{p}.
\end{eqnarray*}
In this way we have proved that $\mu_{\pmb{x}}=\lim_{p\to\infty}\mu_{\pmb{x}_p}$
in $\bar{d}$-distance.

\medskip\noindent
Theorem \ref{theo:separability} ensures that
$\rho(\mu_{\pmb{x}_p},\mu_{\pmb{x}_{p'}})>1/2$ for all $p\neq p'$. The theorem
follows by taking $\mu_p:=\mu_{\pmb{x}_p}$.
\end{proof}

\bigskip

\section{g-measures}
\label{sec:g-measures}

\subsection{} Let us start with a brief reminder of $g$-measures. A  
$g$-function is any Borel measurable function $g:X\to (0,1)$ satisfying 
$\sum_{x_1}g(\pmb{x})=1$, 
and a compatible $g$-measure is any 
$\mu\in\mathcal{M}^+_T(X):=\mathcal{M}^+(X)\cap\mathcal{M}_T(X)$ satisfying
\begin{equation}\label{eq:definition-g}
                 \lim_{n\to\infty}\mu(x_1=a_1|\pmb{x}_2^n=\pmb{a}_2^n):=
                 \lim_{n\to\infty}\frac{\mu[a_1\pmb{a}_2^n]}{\mu[\pmb{a}_2^n]}
                                                        =g(\pmb{a}),
\end{equation}
for all $\pmb{a}\in X$. This notion is intended to generalize that of Markov 
chain and was introduced into ergodic theory by M. Keane in~\cite{Keane1972}. 
It has as ancestor the so called chains with complete connections studied in probability 
theory as early as 1935~\cite{Onicescu1935}. This notion is related, and under 
some conditions is equivalent, to the notion of equilibrium 
states~\cite{Walters1975, Keller}. 
One of the main problems concerning $g$-measures is whether a given $g$-function 
admits a unique compatible $g$-measure. Existence of compatible $g$-measures 
requires only the continuity of $g$, while stronger continuity conditions are needed to
ensure uniqueness. 
For instance, H\"older continuity of the $g$-function implies the existence and 
uniqueness of a compatible $g$-measure for which strong mixing holds. Several 
criteria have been established to ensure uniqueness, all of them relying on the 
regularity of the $g$-function. As mentioned in Section~\ref{sec:introduction}, 
several works have considered the $\bar{d}$-continuity of $g$-measures under 
strong regularity conditions for the limit $g$-function, and have proved in this way 
that the limit $g$-measure has good ergodic properties (the Bernoullicity of the natural 
extension~\cite{Coelho1998} or the fast decay of correlation~\cite{Bressaud1999b}). 
On the other hand, several examples have been proposed to show that the 
continuity of the $g$-function is not enough to ensure the uniqueness of the corresponding 
$g$-measure. Among those examples we find the already classical Bramson-Kalikow 
construction~\cite{Bramson1993}. Recently P. Hulse~\cite{Hulse2006} published 
a construction inspired on the Ising model with long range interactions, of a 
$g$-function where uniqueness fails. For this example, the set of compatible 
$g$-measures necessarily contains non-ergodic measures.

\subsection{} Let us start by reminding the notions of variation of a function and 
that of Markov approximation to a measure.

\medskip \noindent
For $\phi:X\to\mathbb{R}$ and each $\ell\in\mathbb{N}$, the $\ell$-variation 
of $\phi$ is given by
\begin{equation}\label{eq:variation}
{\rm var}_\ell \phi:=\max_{\pmb{a}\in A^\ell}\left\{
\sup_{\pmb{x}\in[\pmb{a}]}\phi(\pmb{x})-\inf_{\pmb{x}\in[\pmb{a}]}\phi(\pmb{x})\right\}.
\end{equation}
For $\phi$ continuous we necessarily have $\lim_{\ell\to\infty}{\rm var}_\ell \phi=0$. 
In this case, the speed of convergence of the variation
characterizes the regularity of $\phi$. For instance, H\"older continuity corresponds 
to exponential decreasing of the variation.

\medskip\noindent Given $\mu\in\mathcal{M}(X)$, for each $\ell\in\mathbb{N}$, the 
canonical 
$\ell$-step Markov approximation to $\mu$ is the only measure 
$\mu_{\ell}\in\mathcal{M}(X)$ satisfying 
\begin{equation}\label{Markov-approximation}
\mu_{\ell}[\pmb{a}_1^n]=\mu[\pmb{a}_1^\ell]\prod_{j=1}^{n-\ell}
                                  \frac{\mu[\pmb{a}_j^{j+\ell}]}{\mu[\pmb{a}_{j}^{j+\ell-1}]},
\end{equation} 
for all $\pmb{a}\in X$ and $n\geq \ell$. 

\medskip\noindent It is well known and easily proved that
$\mu_\ell\rightarrow \mu$ as $\ell\to\infty$ in the vague topology. In this respect, 
concerning the $g$-measures, we have the following theorem.

\begin{theorem}\label{theo:Markov-approximations}
Let $g:X\to [0,1]$ be a continuous $g$-function and $\mu\in\mathcal{M}(X)$ a 
compatible $g$-measure. For each $\ell\in\mathbb{N}$ let $\mu_\ell\in\mathcal{M}(X)$ 
be the canonical $\ell$-step Markov approximation. Then $\mu_\ell\rightarrow \mu$ as 
$\ell\to\infty$ in the projective distance. Furthermore,
\[
\rho(\mu_\ell,\mu)\leq {\rm var}_\ell\, \log\circ g.
\]
\end{theorem}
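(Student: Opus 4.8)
The plan is to estimate $\left|\log\frac{\mu_\ell[\pmb{a}_1^n]}{\mu[\pmb{a}_1^n]}\right|$ for an arbitrary word $\pmb{a}\in A^n$ and show that, after dividing by $n$, it never exceeds $\mathrm{var}_\ell\,\log\circ g$. The starting point is the telescoping identity obtained by combining the compatibility relation~\eqref{eq:definition-g} with the Markov formula~\eqref{Markov-approximation}. For $n>\ell$ one writes the true measure as a product of conditional probabilities, $\mu[\pmb{a}_1^n]=\mu[\pmb{a}_1^\ell]\prod_{j=1}^{n-\ell}\frac{\mu[\pmb{a}_j^{j+\ell}]}{\mu[\pmb{a}_j^{j+\ell-1}]}$ exactly as in~\eqref{Markov-approximation} but with the full conditional chain, so that in fact $\mu$ and $\mu_\ell$ share the first factor $\mu[\pmb{a}_1^\ell]$; hence the ratio $\mu_\ell[\pmb{a}_1^n]/\mu[\pmb{a}_1^n]$ is, up to a correction, a product of ratios of the form $\frac{\mu[\pmb{a}_j^{j+\ell}]/\mu[\pmb{a}_j^{j+\ell-1}]}{\mu(a_j\mid \pmb{a}_{j+1}^{\,?})}$. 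The key observation is that, by $T$-invariance and~\eqref{eq:definition-g}, the quantity $\mu[\pmb{a}_j^{j+\ell}]/\mu[\pmb{a}_j^{j+\ell-1}]$ is a conditional probability $\mu(a_j\mid a_{j+1}\cdots a_{j+\ell})$, i.e.\ a ``length-$\ell$'' approximation to the value $g$ would assign, while the true conditional $\mu(a_j\mid a_{j+1}a_{j+2}\cdots)$ equals $g$ of the corresponding infinite sequence.

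The central estimate is therefore: for any two sequences $\pmb{y},\pmb{z}\in X$ that agree on their first $\ell$ coordinates, $\left|\log g(\pmb{y})-\log g(\pmb{z})\right|\le \mathrm{var}_\ell\,\log\circ g$ — this is just the definition~\eqref{eq:variation} applied to $\phi=\log\circ g$. More precisely I would show that each factor ratio, $\log\!\left(\frac{\mu[\pmb{a}_j^{j+\ell}]/\mu[\pmb{a}_j^{j+\ell-1}]}{\mu[a_j\pmb{a}_{j+1}^{\cdots}]/\mu[\pmb{a}_{j+1}^{\cdots}]}\right)$, is bounded in absolute value by $\mathrm{var}_\ell\,\log\circ g$, because it compares $g$ (or conditional probabilities converging to $g$ by~\eqref{eq:definition-g}) evaluated at points sharing a block of length $\ell+1$ in the relevant coordinates; a limiting argument using~\eqref{eq:definition-g} lets one replace the finite conditional probabilities by exact values of $g$. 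Summing over $j=1,\ldots,n-\ell$ gives $\left|\log\frac{\mu_\ell[\pmb{a}_1^n]}{\mu[\pmb{a}_1^n]}\right|\le (n-\ell)\,\mathrm{var}_\ell\,\log\circ g\le n\,\mathrm{var}_\ell\,\log\circ g$, and for $n\le\ell$ the two measures already agree on $\pmb{a}_1^n$ by~\eqref{Markov-approximation}, so the ratio is $1$. Dividing by $n$ and taking the supremum over $n$ and the maximum over $\pmb{a}\in A^n$ yields $\rho(\mu_\ell,\mu)\le \mathrm{var}_\ell\,\log\circ g$, exactly as claimed. Since $g$ is continuous, $\mathrm{var}_\ell\,\log\circ g\to 0$, giving projective convergence.

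The main obstacle I anticipate is making the telescoping comparison rigorous: the Markov approximation $\mu_\ell$ is built from genuine finite marginals of $\mu$, which are \emph{not} literally values of $g$ but only converge to $g$ as the conditioning window grows (this is what~\eqref{eq:definition-g} provides). One must be careful that $\mu[\pmb{a}_j^{j+\ell}]/\mu[\pmb{a}_j^{j+\ell-1}]$ and the ``tail'' conditional $\mu(a_j\mid a_{j+1}a_{j+2}\cdots)$ both lie within a factor $e^{\mathrm{var}_\ell\,\log\circ g}$ of a common value — and this requires an argument (via $T$-invariance and a further telescoping in the conditioning length, or via a martingale/limit argument) that the ratio of any two such conditional probabilities obtained by conditioning on sequences agreeing in the first $\ell$ symbols is controlled by $\mathrm{var}_\ell\,\log\circ g$. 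Once that uniform comparison of conditional probabilities is in hand, the summation and the passage to $\rho$ are routine bookkeeping.
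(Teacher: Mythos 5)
Your overall plan---telescoping $\log\bigl(\mu_\ell[\pmb{a}_1^n]/\mu[\pmb{a}_1^n]\bigr)$ and bounding each factor by ${\rm var}_\ell\,\log\circ g$---is the same as the paper's, but the identification on which your factor-by-factor comparison rests is false. You assert that, ``by $T$-invariance'', the factor $\mu[\pmb{a}_j^{j+\ell}]/\mu[\pmb{a}_j^{j+\ell-1}]$ from~\eqref{Markov-approximation} equals the conditional probability $\mu(a_j\mid a_{j+1}\cdots a_{j+\ell})$. It does not: that factor is the conditional probability of the \emph{last} symbol $a_{j+\ell}$ given the preceding block, whereas $\mu(a_j\mid a_{j+1}\cdots a_{j+\ell})=\mu[\pmb{a}_j^{j+\ell}]/\mu[\pmb{a}_{j+1}^{j+\ell}]$ conditions the \emph{first} symbol on the following block; the two coincide only if $\mu[\pmb{a}_j^{j+\ell-1}]=\mu[\pmb{a}_{j+1}^{j+\ell}]$, which $T$-invariance does not give. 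The distinction is essential, because the defining relation~\eqref{eq:definition-g} controls only conditionals of the first symbol given its future; it gives no uniform control on conditionals of a symbol given its past, so with your pairing (a ``past-window'' conditional in the numerator against a ``future'' conditional in the denominator, or a short-past against a long-past forward conditional) the per-factor bound by $e^{{\rm var}_\ell\log\circ g}$ is simply not available, and no routine bookkeeping recovers it. The missing idea is the regrouping the paper performs: since $\mu_\ell[\pmb{a}_1^n]$ is a ratio of products of marginals of $\mu$, it can be rewritten as $\prod_{j=1}^{n-\ell-1}\bigl(\mu[\pmb{a}_j^{j+\ell}]/\mu[\pmb{a}_{j+1}^{j+\ell}]\bigr)\,\mu[\pmb{a}_{n-\ell}^n]$ and compared term by term with $\mu[\pmb{a}_1^n]=\prod_{j=1}^{n-\ell-1}\bigl(\mu[\pmb{a}_j^{n}]/\mu[\pmb{a}_{j+1}^{n}]\bigr)\,\mu[\pmb{a}_{n-\ell}^n]$, so that both members of each pair are ``first symbol given future'' conditionals whose conditioning data agree on the window $\pmb{a}_j^{j+\ell}$.

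Second, the lemma that makes this comparison work, inequality~\eqref{ineq:variation}, namely $\min_{\pmb{x}\in[\pmb{a}_1^\ell]}g(\pmb{x})\le\mu[\pmb{a}_1^n]/\mu[\pmb{a}_2^n]\le\max_{\pmb{x}\in[\pmb{a}_1^\ell]}g(\pmb{x})$ for all $n\ge\ell$, is exactly what you flag as ``the main obstacle'' but do not prove; you only gesture at a martingale or limiting argument. The paper obtains it by writing $\mu[\pmb{a}_1^n]/\mu[\pmb{a}_2^n]$ as a convex combination, over extensions $\pmb{a}_{n+1}^m$, of the ratios $\mu[\pmb{a}_1^m]/\mu[\pmb{a}_2^m]$ with weights $\mu[\pmb{a}_2^m]/\mu[\pmb{a}_2^n]$, and letting $m\to\infty$, where~\eqref{eq:definition-g} turns these ratios into values of $g$ at points of $[\pmb{a}_1^\ell]$. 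With that lemma and the correct regrouping, your concluding bookkeeping (at most $n-\ell-1$ factors each contributing at most ${\rm var}_\ell\,\log\circ g$, equality of $\mu$ and $\mu_\ell$ on words of length at most $\ell$, and ${\rm var}_\ell\,\log\circ g\to0$ by continuity) is indeed the paper's conclusion; without them, the proposal as written does not establish the stated bound.
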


\begin{proof}
First note that for all $\pmb{a}\in X$ and $n\leq m$ we have
\[
\frac{\mu[\pmb{a}_1^n]}{\mu[\pmb{a}_2^n]}=
 \sum_{\pmb{a}_{n+1}^m\in A^{m-n}}\frac{\mu[\pmb{a}_1^m]}{\mu[\pmb{a}_2^m]}
     \times\frac{\mu[\pmb{a}_2^m]}{\mu[\pmb{a}_2^n]}
     =\mathbb{E}_{p}\left(\frac{\mu[\pmb{a}_1^m]}{\mu[\pmb{a}_2^m]}\right),
\]
with $p:A^m\to (0,1)$ a probability distribution given by 
\[
p(\pmb{b})=\left\{\begin{array}{cr}
[\pmb{b}_2^m]/\mu[\pmb{b}_2^n] & \text{ if } \pmb{b}_1^n=\pmb{a}_1^n,\\
0                              & \text{ otherwise}.

\end{array}\right.
\] 
It follows from this, and taking the limit $m\to\infty$, that
\begin{equation}\label{ineq:variation}
\min_{\pmb{x}\in [\pmb{a}_1^\ell]} g(\pmb{x}) \leq 
 \frac{\mu[\pmb{a}_1^n]}{\mu[\pmb{a}_2^n]}  \leq 
\max_{\pmb{x}\in [\pmb{a}_1^\ell]} g(\pmb{x}),
\end{equation}
for all $\pmb{a}\in X$ and $\ell\leq n$.

\medskip\noindent
For $n\leq \ell$ we have $\mu_\ell[\pmb{a}_1^n]=\mu[\pmb{a}_1^n]$ for all 
$\pmb{a}\in X$. On the other hand, for $n > \ell$ and $\pmb{a}\in X$ by writing 
\begin{eqnarray*}
\mu[\pmb{a}_1^n]     &=&\prod_{j=1}^{n-\ell-1}
\frac{\mu[\pmb{a}_j^n]}{\mu[\pmb{a}_{j+1}^n]}\times \mu[\pmb{a}_{n-\ell}^n],\\
\mu_\ell[\pmb{a}_1^n]&=&\prod_{j=1}^{n-\ell-1}
\frac{\mu[\pmb{a}_j^{j+\ell}]}{\mu[\pmb{a}_{j+1}^{j+\ell}]}\times \mu[\pmb{a}_{n-\ell}^n],            
\end{eqnarray*}
we readily obtain
\[
\left|\log\frac{\mu[\pmb{a}_1^n]}{\mu_\ell[\pmb{a}_1^n]}\right|
\leq\sum_{j=1}^{n-\ell-1}
\left\{
 \left|\log\frac{\mu[\pmb{a}_j^n]}{\mu[\pmb{a}_{j+1}^n]}-
       \log\frac{\mu[\pmb{a}_j^{j+\ell}]}{\mu[\pmb{a}_{j+1}^{j+\ell}]}\right|
 \right\}.
\]
Inequalities~\eqref{ineq:variation} imply
\begin{eqnarray*}
\frac{1}{n}\left|\log\frac{\mu[\pmb{a}_1^n]}{\mu_\ell[\pmb{a}_1^n]}\right|
                &\leq & \frac{1}{n}\sum_{j=1}^{n-\ell-1} \left\{
                        \max_{\pmb{x}\in [\pmb{a}_j^{j+\ell}]} \log\circ g(\pmb{x})
                               -\min_{\pmb{x}\in [\pmb{a}_j^{j+\ell}]} \log\circ g(\pmb{x})\right\}\\
               &\leq & {\rm var}_{\ell}\,\log\circ g,  
\end{eqnarray*}
for all $\pmb{a}\in X$ and $n\in \mathbb{N}$, from which it follows that
$\rho(\mu_\ell,\mu)\leq {\rm var}_\ell\, \log\circ g$, and the proof is done.

\end{proof}

\subsection{}  Let us describe the construction by P. Hulse cited above, which we 
slightly modify to fit in our context.  
Consider the real map $t\mapsto \psi(t)=e^t(e^t+e^{-t})^{-1}$ and fix sequences
$\{h_\ell\in\mathbb{R}^+\}_{\ell=0}^\infty$, 
$\{h'_\ell\in\mathbb{R}^+\}_{\ell=0}^\infty$, 
$\{J_\ell\in \mathbb{R}^+\}_{\ell=1}^\infty$, and 
$\{\Lambda_\ell\in\mathbb{N}\}_{\ell=0}^\infty$.
Let $\pi:A\to\{-1,0,1\}$ be such that $\#\pi^{-1}(\{1\})=\#\pi^{-1}(\{-1\})=\lfloor \#A/2\rfloor$. 
With this define the locally constant functions $\{g_\ell, g'_\ell:X\to[0,1]\}_{\ell\in\mathbb{N}}$ 
given by
\begin{eqnarray}\label{Eq:Hulse}
g_\ell(\pmb{x})&=&\psi\left(\beta\,\pi(x_1)
      \left(\sum_{k=1}^\ell J_k\langle\pi(\pmb{x})\rangle_{\Lambda_k}+h_\ell\right)\right),\\
g'_\ell(\pmb{x})&=&\psi\left(\beta\,\pi(x_1)
      \left(\sum_{k=1}^\ell J_k\langle\pi(\pmb{x})\rangle_{\Lambda_k}+h'_\ell\right)\right),
\end{eqnarray}
where 
$\langle\pi(\pmb{x})\rangle_{\Lambda}=\Lambda^{-1}\sum_{m=1}^{\Lambda}\pi(x_k)$ 
for each $\Lambda\in\mathbb{N}$. 
Now, for each $\ell\in\mathbb{N}$, both $g_\ell$ and $g'_\ell$ are constants inside each
cylinder of length $\Lambda_\ell$,  therefore Walters' criterion (logarithm with summable 
variations~\cite{Walters1975}) ensures the existence and uniqueness of $g$-measures 
$\mu_\ell$ and $\mu'_\ell$ compatible with $g_\ell$ and $g'_\ell$ respectively. 
Hulse's construction consist on determining sequences 
$\{h_\ell\in\mathbb{R}^+\}_{\ell=0}^\infty$, $\{h'_\ell\in\mathbb{R}^+\}_{\ell=0}^\infty$, 
$\{J_\ell\in \mathbb{R}^+\}_{\ell=1}^\infty$, and
$\{\Lambda_\ell\in\mathbb{N}\}_{\ell=0}^\infty$, 
ensuring that $\{g_\ell\}_{\ell\in\mathbb{N}}$ and $\{g'_\ell\}_{\ell\in\mathbb{N}}$ have a 
common continuous limit $g:X\to[0,1]$, while $\{\mu_\ell\}_{n\in\mathbb{N}}$ and 
$\{\mu'_\ell\}_{\ell\in\mathbb{N}}$ do not converge to the same measure.
In this way he obtains a simplex (made of all the convex combinations of the two 
different limiting measures) of compatible $g$-measures. 

\medskip\noindent 
From Hulse's construction and Theorem~\ref{theo:Markov-approximations} it readily 
follows the next result. 

\medskip

\begin{theorem}\label{theo:notcoarser}
There exists a sequence $\{\mu_{\ell}\in\mathcal{M}^{+}(X)\}_{\ell\in\mathbb{N}}$ 
converging in the projective distances, but not in the $\bar{d}$-distance.
\end{theorem}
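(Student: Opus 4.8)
The plan is to exhibit the required sequence directly from Hulse's construction, which we have just recalled. Recall that Hulse produces sequences $\{h_\ell\}$, $\{h'_\ell\}$, $\{J_\ell\}$, $\{\Lambda_\ell\}$ such that the locally constant $g$-functions $g_\ell$ and $g'_\ell$ from Equation~\eqref{Eq:Hulse} share a common continuous limit $g:X\to[0,1]$, while the associated (unique) compatible $g$-measures $\mu_\ell$ and $\mu'_\ell$ do not converge to a common limit; in fact Hulse arranges that $\mu_\ell\to\mu$ and $\mu'_\ell\to\mu'$ with $\mu\neq\mu'$, both $\mu$ and $\mu'$ being compatible with $g$. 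The first step is simply to record that $g$, being continuous, is a $g$-function to which Theorem~\ref{theo:Markov-approximations} applies for \emph{any} compatible $g$-measure; in particular, writing $\mu^{(m)}$ for the canonical $m$-step Markov approximation of $\mu$, we get $\rho(\mu^{(m)},\mu)\le \mathrm{var}_m\,\log\circ g\to 0$. Thus $\{\mu^{(m)}\}_{m\in\mathbb{N}}$ is a sequence in $\mathcal{M}^+(X)$ converging to $\mu$ in the projective distance (the $\mu^{(m)}$ are fully supported since $\mu$ is, as $\mu\in\mathcal{M}^+_T(X)$ by definition of $g$-measure).

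The second step is to show this same sequence does not converge in $\bar d$. Suppose for contradiction that $\mu^{(m)}\to\eta$ in $\bar d$ for some $\eta$. Since the $\bar d$-topology is finer than the vague topology (recalled in the excerpt), $\mu^{(m)}\to\eta$ vaguely; but $\mu^{(m)}\to\mu$ vaguely (it is standard, and also follows from Theorem~\ref{theo:Markov-approximations} via Equation~\eqref{eq:compare-vage}), so $\eta=\mu$. Now I invoke the key structural fact that $\bar d$-limits of Markov measures are ``loosely Markov'' / retain the property of being approximable by their own Markov approximations — concretely, I would argue that along the diagonal, Hulse's construction forces the two Markov families $\mu_\ell$ and $\mu'_\ell$ to be close in $\bar d$ to suitable Markov approximations of $\mu$ and $\mu'$ respectively; hence if the Markov approximations of $\mu$ formed a $\bar d$-Cauchy sequence, one could sandwich $\mu$ and $\mu'$ together in $\bar d$, contradicting $\mu\neq\mu'$ together with the fact that Hulse's whole point is that the diagonal sequence splits.

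A cleaner route, which I would prefer to pursue, is to take $\mu_\ell$ themselves as the sequence: set the sequence of the theorem to be $\{\mu_\ell\}_{\ell\in\mathbb{N}}$, the unique $g$-measures compatible with the locally constant $g_\ell$. Since $g_\ell$ is constant on cylinders of length $\Lambda_\ell$, Theorem~\ref{theo:Markov-approximations} gives $\rho(\mu_\ell,\nu)\le \mathrm{var}_{\Lambda_\ell}\log\circ g_\ell = 0$ for the canonical $\Lambda_\ell$-step Markov approximation, so each $\mu_\ell$ is itself $\Lambda_\ell$-step Markov; more to the point, one shows $\rho(\mu_\ell,\mu)\le \mathrm{var}_{\min}\,\log\circ g$-type bound is \emph{not} what we want — rather, we want $\{\mu_\ell\}$ Cauchy in $\rho$. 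For that, estimate $\rho(\mu_\ell,\mu_{\ell'})$ using the triangle inequality through $\mu$ together with the comparison $\rho(\mu_\ell,\mu)\le \|\log g_\ell-\log g\|_\infty + \mathrm{var}\text{-terms}$ coming from the uniform convergence $g_\ell\to g$ and the summable-variation regularity that Walters' criterion supplies uniformly; Hulse's parameters are chosen precisely so that $\|\log g_\ell-\log g\|_\infty\to 0$ fast enough, which yields $\rho(\mu_\ell,\mu_{\ell'})\to 0$. Meanwhile $\{\mu_\ell\}$ cannot converge in $\bar d$: if it did, its $\bar d$-limit would be its vague limit $\mu$, but by the analogous computation $\{\mu'_\ell\}$ is also $\rho$-Cauchy with the same behaviour, and the two families are $\bar d$-asymptotically comparable (indeed $\bar d(\mu_\ell,\mu'_\ell)\to 0$ since $g_\ell$ and $g'_\ell$ differ only in the external-field term $h_\ell$ vs $h'_\ell$ which Hulse sends together), forcing $\mu=\mu'$ and contradicting Hulse's conclusion.

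The main obstacle is the $\bar d$ half: establishing rigorously that a $\bar d$-convergent sequence of these $g$-measures would force $\mu=\mu'$. The honest mechanism is that $\bar d$-convergence implies convergence of the entropy and of all finite marginals in a controlled way, and combined with the fact (from Theorem~\ref{theo:Markov-approximations}, since $g$ is a continuous $g$-function) that $\mu$ and $\mu'$ are both projective limits of their own Markov approximations, one shows the $\bar d$-limit of $\{\mu_\ell\}$ must be $g$-compatible \emph{and} determined by the tail behaviour of the $g_\ell$'s — but Hulse built the $g_\ell$'s so that this tail behaviour is genuinely bistable. I would therefore isolate, as the crux lemma, the statement that $\bar d(\mu_\ell,\mu'_\ell)\to 0$ (via an explicit coupling that agrees outside a sparse set of coordinates, exactly as in the proof of Theorem~\ref{theo:notfiner}), and then conclude: if $\{\mu_\ell\}$ were $\bar d$-convergent it would be $\bar d$-Cauchy, hence so would the interleaved sequence $\mu_1,\mu'_1,\mu_2,\mu'_2,\dots$, whose two subsequential vague limits $\mu$ and $\mu'$ would then have to coincide, contradicting $\mu\neq\mu'$. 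This completes the proof, and combined with Theorem~\ref{theo:notfiner} shows that $\rho$ and $\bar d$ are not comparable.
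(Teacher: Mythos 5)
There is a genuine gap: both of your routes miss the one idea the paper's proof actually turns on, namely the choice of the measure to be approximated. The paper does not approximate one of Hulse's extremal limits $\mu$ or $\mu'$; it takes a \emph{non-ergodic} element of $\mathcal{M}(g)$ (a strict convex combination of two distinct compatible measures, which exists precisely because Hulse's $g$ fails uniqueness), forms its canonical $\ell$-step Markov approximations, and applies Theorem~\ref{theo:Markov-approximations} to get projective convergence. The $\bar{d}$ half is then immediate: each Markov approximation is fully supported, hence mixing, $\bar{d}$-limits of mixing measures are mixing, and the only candidate limit (the vague limit) is the non-ergodic $\mu$ — contradiction. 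In your first sketch you approximate an extremal compatible measure, for which there is no such obstruction (its Markov approximations may very well converge in $\bar{d}$), and the argument you offer in its place (``loosely Markov'' limits, sandwiching $\mu$ and $\mu'$, ``bistable tail behaviour'') is not a proof and I do not see how to make it one.

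Your ``cleaner route'' has two concrete failures. First, the mechanism you invoke to show $\{\mu_\ell\}$ is $\rho$-Cauchy is a Theorem~\ref{theo:g-convergence}-type bound; but the hypothesis of that theorem \emph{cannot} hold for Hulse's $g_\ell$, since its conclusion is that $\mathcal{M}(g)$ is a singleton, which is exactly what Hulse's construction violates. So that estimate is unavailable, and no other proof of $\rho$-Cauchyness is given. Second, your proposed crux lemma $\bar{d}(\mu_\ell,\mu'_\ell)\to 0$ is false outright: for stationary measures a coupling realizing $\bar{d}$ gives $|\mu_\ell[\pmb{a}]-\mu'_\ell[\pmb{a}]|\le n\,\bar{d}(\mu_\ell,\mu'_\ell)$ for every word $\pmb{a}$ of length $n$, so $\bar{d}(\mu_\ell,\mu'_\ell)\to 0$ together with the vague convergences $\mu_\ell\to\mu$, $\mu'_\ell\to\mu'$ would force $\mu=\mu'$ unconditionally, contradicting Hulse before you ever assume $\bar{d}$-convergence of $\{\mu_\ell\}$. (The explicit-coupling trick from the proof of Theorem~\ref{theo:notfiner} does not transfer here: those measures were related by a coordinatewise permutation, whereas $\mu_\ell$ and $\mu'_\ell$ are not.) The fix is to abandon both routes and argue as the paper does, with a non-ergodic compatible measure and the mixing-is-$\bar{d}$-closed fact.
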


\begin{proof}
Let $g:A\to[0,1]$ be the $g$-function in Hulse's construction above, and let 
${\mathcal M}(g)$ the collection of all the compatible $g$-measures. Since 
${\mathcal M}(g)$ is not a singleton, then it necessarily contains non-ergodic measures,
for instance any strict convex combination of two different extremal measures. 
Let $\mu$ be such a non-ergodic measure.
Now, for each $\ell\in \mathbb{N}$, let $\mu_{\ell}$ be the $\ell$-step Markov 
approximation to $\mu$, as defined in
Equation~\eqref{Markov-approximation}. According to 
Theorem~\ref{theo:Markov-approximations}, the sequence 
$\{\mu_{\ell}\}_{\ell\in\mathbb{N}}$ converges to $\mu$ in the projective distance. 
It is know that $\bar{d}$-limits of mixing measures are mixing (see Theorem I.9.17 
in~\cite{Shields} for instance). Since $\mu$ is fully-supported, then $\mu_{\ell}$ is 
a mixing measure for each $\ell\in\mathbb{N}$ but since $\mu$ is not even ergodic, 
then $\{\mu_{\ell}\}_{\ell\in\mathbb{N}}$ cannot converge in $\bar{d}$-distance.
\end{proof}


\subsection{} It is know that the entropy is a $\bar{d}$-continuous functional in the 
class of ergodic processes (Theorem I.9.16 in~\cite{Shields}),
while it is only upper semicontinuous with respect to the vague topology (Theorem I.9.1 
in~\cite{Shields}). Concerning the projective distance, we have the following result.

\begin{theorem}\label{theo:entropy-continuity}
Assume $g$ admits a unique $g$-measure $\mu$ (in which case this measure is 
ergodic), and suppose that $\{\mu_p\}_{p\in{\mathbb N}}$ is a sequence of ergodic 
measures converging to $\mu$ in the projective distance, then
\[
\lim_{p\to\infty}h(\mu_p)=h(\mu)\equiv-\int \log\circ g\ d\mu.
\]

\end{theorem}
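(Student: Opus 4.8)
The plan is to estimate the entropy $h(\mu_p)$ directly from its Shannon--McMillan--Breiman / block representation and to use the projective closeness $\rho(\mu_p,\mu)\to 0$ to compare block entropies uniformly. Recall that for any $T$-invariant measure $\nu$ one has
\[
h(\nu)=\lim_{n\to\infty}\frac{1}{n}H_n(\nu),
\qquad
H_n(\nu):=-\sum_{\pmb{a}\in A^n}\nu[\pmb{a}]\log\nu[\pmb{a}],
\]
and moreover $h(\nu)=\inf_{n}\tfrac1n H_n(\nu)$ when $\nu$ is invariant, so the convergence is monotone. The projective estimate gives, for every $n$ and every $\pmb{a}\in A^n$,
\[
\bigl|\log\mu_p[\pmb{a}]-\log\mu[\pmb{a}]\bigr|\leq n\,\rho(\mu_p,\mu),
\]
hence $\mu[\pmb{a}]\,e^{-n\rho(\mu_p,\mu)}\leq \mu_p[\pmb{a}]\leq \mu[\pmb{a}]\,e^{n\rho(\mu_p,\mu)}$. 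The first step is to insert these two-sided bounds into $H_n(\mu_p)$: writing $-\mu_p[\pmb{a}]\log\mu_p[\pmb{a}] = -\mu_p[\pmb{a}]\log\mu[\pmb{a}] - \mu_p[\pmb{a}]\log(\mu_p[\pmb{a}]/\mu[\pmb{a}])$ and summing over $\pmb{a}\in A^n$, the second term is bounded in absolute value by $n\rho(\mu_p,\mu)$, while the first term is $\sum_{\pmb a}\mu_p[\pmb a](-\log\mu[\pmb a])$, which by the same two-sided bound lies within $e^{\pm n\rho(\mu_p,\mu)}$-factors of $H_n(\mu)$ after controlling $-\log\mu[\pmb a]$. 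A cleaner route for this first term: since $|\mu_p[\pmb a]-\mu[\pmb a]|\leq (e^{n\rho}-1)\mu[\pmb a]$, we get
\[
\Bigl|\sum_{\pmb a\in A^n}(\mu_p[\pmb a]-\mu[\pmb a])\log\mu[\pmb a]\Bigr|
\leq (e^{n\rho(\mu_p,\mu)}-1)\,\sum_{\pmb a\in A^n}\mu[\pmb a]\,|\log\mu[\pmb a]|
= (e^{n\rho(\mu_p,\mu)}-1)\,H_n(\mu),
\]
using $\mu[\pmb a]\le 1$. Combining,
\[
\bigl|H_n(\mu_p)-H_n(\mu)\bigr|\leq (e^{n\rho(\mu_p,\mu)}-1)\,H_n(\mu) + n\,\rho(\mu_p,\mu),
\]
so
\[
\Bigl|\tfrac1n H_n(\mu_p)-\tfrac1n H_n(\mu)\Bigr|
\leq \frac{e^{n\rho(\mu_p,\mu)}-1}{n}\,H_n(\mu) + \rho(\mu_p,\mu).
\]

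The second step is the passage to the limit, and this is where the main obstacle lies: the right-hand side contains the factor $(e^{n\rho}-1)/n$, which for fixed $\rho>0$ blows up as $n\to\infty$, so one cannot simply let $n\to\infty$ first. The fix is to exploit that $\mu$ is the \emph{unique} $g$-measure, hence ergodic, and that $h(\mu)=-\int\log\circ g\,d\mu$ (the classical identity for $g$-measures, or the variational principle together with Walters' results). Because $h(\mu)=\inf_n \tfrac1n H_n(\mu)$, for any $\varepsilon>0$ fix $n=n(\varepsilon)$ with $\tfrac1n H_n(\mu)\le h(\mu)+\varepsilon$; this also bounds $H_n(\mu)\le n(h(\mu)+\varepsilon)$. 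Now choose $p$ large enough that $n(\varepsilon)\,\rho(\mu_p,\mu)$ is so small that $(e^{n\rho(\mu_p,\mu)}-1)/n \le \varepsilon/(h(\mu)+\varepsilon)$ and $\rho(\mu_p,\mu)\le\varepsilon$. For such $p$,
\[
h(\mu_p)\ \le\ \tfrac1n H_n(\mu_p)\ \le\ \tfrac1n H_n(\mu) + \varepsilon + \varepsilon\ \le\ h(\mu)+3\varepsilon,
\]
where the first inequality is the infimum characterization of $h(\mu_p)$ applied at the same $n$ (valid since each $\mu_p$ is invariant). This yields $\limsup_{p}h(\mu_p)\le h(\mu)$.

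For the reverse inequality $\liminf_p h(\mu_p)\ge h(\mu)$ I would argue by lower semicontinuity made quantitative, again via the block bound but now reading it the other way: for each fixed $n$, $\tfrac1n H_n(\mu_p)\ge \tfrac1n H_n(\mu) - \tfrac{e^{n\rho(\mu_p,\mu)}-1}{n}H_n(\mu) - \rho(\mu_p,\mu)$, so $\liminf_p \tfrac1n H_n(\mu_p)\ge \tfrac1n H_n(\mu)$; but this only controls the $n$-block entropy of $\mu_p$ from below, not $h(\mu_p)$ itself. To close the gap I would instead invoke that $\mu$ is ergodic and use the known fact that entropy is lower semicontinuous on the class of ergodic measures in a topology finer than vague — or, more self-containedly, observe that since the $\mu_p$ are ergodic and converge vaguely to the ergodic measure $\mu$ (vague convergence follows from $\rho$-convergence by Equation~\eqref{eq:compare-vage}), and since on ergodic measures entropy is $\bar d$-continuous, it suffices to upgrade $\rho$-convergence to a statement that forces $\liminf h(\mu_p)\ge h(\mu)$. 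The honest pivot, and the real crux, is this: the identity $h(\mu)=-\int\log\circ g\,d\mu$ together with $\mu_p\to\mu$ vaguely gives $\int\log\circ g\,d\mu_p\to\int\log\circ g\,d\mu$ because $\log\circ g$ is continuous (here uniqueness is used to ensure $g$ is continuous and the limit measure is the right one), so it would suffice to show $h(\mu_p)\ge -\int\log\circ g\,d\mu_p + o(1)$; combined with the variational bound $h(\mu_p)\le$ (pressure), and the already-proven $\limsup h(\mu_p)\le h(\mu)$, one squeezes $h(\mu_p)\to h(\mu)$. I expect the delicate point throughout to be the interchange of the limits in $p$ and $n$ caused by the exponential factor $e^{n\rho}$, handled as above by freezing $n$ first using the infimum characterization of entropy for invariant measures, and the use of uniqueness of the $g$-measure precisely to identify the vague limit of $\int\log\circ g\,d\mu_p$ with $h(\mu)$.
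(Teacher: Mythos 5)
Your first half is fine: the block-entropy comparison $\bigl|H_n(\mu_p)-H_n(\mu)\bigr|\leq (e^{n\rho(\mu_p,\mu)}-1)H_n(\mu)+n\rho(\mu_p,\mu)$ is correct, and freezing $n$ via $h(\nu)=\inf_n\frac1n H_n(\nu)$ legitimately yields $\limsup_p h(\mu_p)\leq h(\mu)$. But this is the easy half: it is nothing more than upper semicontinuity of entropy, which already holds in the vague topology. The substance of the theorem is the lower bound $\liminf_p h(\mu_p)\geq h(\mu)$, and there your argument has a genuine gap. You correctly identify that it would suffice to prove $h(\mu_p)\geq -\int\log\circ g\,d\mu_p+o(1)$, but you never prove it; the tools you reach for cannot do it. Entropy is \emph{not} lower semicontinuous in the vague topology, even restricted to ergodic measures, and the $\bar d$-continuity of entropy on ergodic measures is unavailable here because projective convergence does not imply $\bar d$-convergence — this paper itself proves (Theorem~\ref{theo:notcoarser}) that $\rho$ and $\bar d$ are incomparable, so any route through $\bar d$ is circular or simply false. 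Likewise the ``variational bound $h(\mu_p)\leq$ pressure'' gives $h(\mu_p)+\int\log\circ g\,d\mu_p\leq 0$, which is the wrong direction: it reproves your $\limsup$ bound and says nothing about the $\liminf$.

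The missing ingredient, which is exactly how the paper proceeds, is the relative entropy $h(\mu_p|\mu)=\lim_n\frac1n\sum_{\pmb a\in A^n}\mu_p[\pmb a]\log\frac{\mu_p[\pmb a]}{\mu[\pmb a]}$. The projective bound gives immediately $0\leq h(\mu_p|\mu)\leq\rho(\mu_p,\mu)$, and — because $\mu$ is a measure compatible with the $g$-function $g$ — one has the identity $h(\mu_p|\mu)=-h(\mu_p)-\int_X\log\circ g\,d\mu_p$ (the argument of \cite[Section 3.2]{chazottes1997}, which uses that the conditional probabilities of $\mu$ are given by $g$). This is precisely the inequality you declared sufficient: $h(\mu_p)=-\int\log\circ g\,d\mu_p-h(\mu_p|\mu)\geq-\int\log\circ g\,d\mu_p-\rho(\mu_p,\mu)$. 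Combined with vague convergence of $\int\log\circ g\,d\mu_p$ to $\int\log\circ g\,d\mu$ (which you do have, since $\rho$ dominates the vague distance and $\log\circ g$ is continuous) and Ledrappier's identity $h(\mu)=-\int\log\circ g\,d\mu$, this closes the argument; note also that once you have this, your entire block-entropy computation for the $\limsup$ becomes unnecessary, since the same relative-entropy identity gives both directions at once. Without establishing that identity (or some substitute for it), your proposal does not prove the theorem.
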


\begin{proof}
First we prove that the relative entropy
\[h(\mu_p|\mu):=\lim_{n\to\infty}\frac{1}{n}\sum_{\pmb{a}\in A^n}
   \mu_p[\pmb{a}]\log\frac{\mu_p[\pmb{a}]}{\mu[\pmb{a}]},
\] 
which can easily proved to be non-negative, converges to zero as $p\to\infty$. 
Indeed since 
\[
e^{-n\rho(\mu_p,\mu)}\leq
           \frac{\mu_p[\pmb{a}]}{\mu[\pmb{a}]}\leq e^{n\rho(\mu_p,\mu)}
\]  \
for each $n\in {\mathbb N}$ and $\pmb{a}\in A^n$, then
\begin{eqnarray*}
0\leq h(\mu_p|\mu) & = & \lim_{n\to\infty}\frac{1}{n}\sum_{\pmb{a}\in A^n}\mu_p[\pmb{a}]           
                                          \log\frac{\mu_p[\pmb{a}]}{\mu[\pmb{a}]} \\
                               & \leq & \lim_{n\to\infty}\frac{1}{n}\sum_{\pmb{a}\in A^n}
                                    \mu_p[\pmb{a}]\,n\rho(\mu_p,\mu) = \rho(\mu_p,\mu),
\end{eqnarray*}
and the claim follows. Now, following the arguments in \cite[Section 3.2]{chazottes1997}, 
we readily deduce that 
\[
h(\mu_p|\mu)=-h(\mu_p)-\int_X \log\circ g\ d\mu_p.
\]
Now, since the topology of the projective distance is finer than the vague topology, we 
necessarily have
\[
\lim_{p\to\infty}\int_X \log\circ g\ d\mu_p=\int_X \log\circ g\ d\mu. 
\]
Finally, the Variational Principle for $g$-measures (see~\cite{Ledrappier1974} for 
a proof) establishes that
\[
h(\mu)=-\int_X \log\circ g\,d\mu.
\] 
From all the above arguments it follows that
\begin{eqnarray*}
\lim_{p\to\infty}h(\mu)-h(\mu_p)
              &=&\lim_{p\to\infty}\left(-\int_X \log\circ g\,d\mu-h(\mu_p)\right)\\
              &=&\lim_{p\to\infty}\left(-\int_X \log\circ g\,d\mu_p-h(\mu_p)\right)\\
                                                &=&\lim_{p\to\infty}h(\mu_p|\mu)=0,
\end{eqnarray*}
and the proof is done.
\end{proof}

\subsection{} In this paragraph we explore the relationship between convergence of
$g$-functions and the possible convergence in projective distance, of the 
corresponding $g$-measures. An analogous result, concerning the $\bar{d}$-distance, 
was obtained by Coelho and Quas in~\cite{Coelho1998}. Before stating our result, 
let us fix some notation.

\medskip\noindent
Let $\mathcal{G}\subset C_0(X)$ denote the set of $g$-functions, {\it i. e.} the set 
of continuous functions $g: X\to (0,1)$ satisfying 
$\sum_{a\in A} g(a\pmb{x})=1,\ \forall \, \pmb{x}\in X$.
Now, for $g\in\mathcal{G}$ denote by $\mathcal{M}(g)\subset\mathcal{M}(X)$ the simplex 
made of all probability measures compatible with $g$ (or $g$-measures) as defined in
Equation~\eqref{eq:definition-g}. 

\medskip\noindent
For $\phi: X\to \mathbb{R}$ and $N\in\mathbb{N}$, let us denote
${\rm svar}_\ell\phi=\sum_{k=1}^\ell{\rm var}_k\phi$ where ${\rm var}_k \phi$ is 
defined as in Equation~\eqref{eq:variation}. We will say that a locally constant 
function $\phi: X\to\mathbb{R}$ has range $\ell\in\mathbb{N}$ whenever
\[\pmb{x}_1^{\ell}=\pmb{y}_1^\ell \Rightarrow \phi(\pmb{x})=\phi(\pmb{y}).\]
Clearly, for a locally constant function of range $\ell$,
${\rm var}_n\phi= 0$ for all $n\geq \ell$. It is not hard to prove that
if $g\in \mathcal{G}$ is locally constant of range $\ell+1$, then 
$\mathcal{M}(g)$ contains a unique $\ell$-step Markov measure 
(see Section~\ref{sec:locally-constant} for details).
We have the following.

\begin{theorem}\label{theo:g-convergence}
Let $\left\{g_\ell\in \mathcal{G}\right\}_{\ell\in \mathbb{N}}$ be a 
sequence of locally constant functions converging to $g$ in the sup-norm, and
such that for each $\ell\in\mathbb{N}$ the function $g_\ell$ is locally constant 
of range $\ell+1$. If 
\[
\lim_{\ell\to\infty}||\log(g/g_\ell)||e^{{\rm svar}_\ell\log\circ g_\ell}=0,
\]
then the sequences $\left\{\mu_\ell\right\}_{\ell\in \mathbb{N}}$, where 
$\mu_\ell$ is the unique measure in $\mathcal{M}\left(g_\ell\right)$, converges in 
projective distance. Furthermore, the limit measure $\mu\in\mathcal{M}(X)$ is 
the unique measure in $\mathcal{M}(g)$.
\end{theorem}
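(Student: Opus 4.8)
The plan is to control $\rho(\mu_\ell,\mu_{\ell'})$ for $\ell<\ell'$ by comparing each $\mu_\ell$ with an intermediate object, and then to show the Cauchy bound goes to zero under the hypothesis $\|\log(g/g_\ell)\|e^{{\rm svar}_\ell\log\circ g_\ell}\to 0$. Since $\mu_\ell$ is the unique $\ell$-step Markov measure compatible with the range-$(\ell+1)$ function $g_\ell$, its cylinder probabilities factor as in Equation~\eqref{Markov-approximation} with the one-step conditional probabilities essentially given by $g_\ell$ itself. The natural first step is therefore an estimate of the form
\[
\frac{1}{n}\left|\log\frac{\mu_\ell[\pmb{a}_1^n]}{\mu_{\ell'}[\pmb{a}_1^n]}\right|
 \leq \|\log(g_\ell/g_{\ell'})\| + (\text{boundary terms})/n,
\]
but this crude bound will not suffice because $\|\log(g_\ell/g_{\ell'})\|$ need not be summable; the point of the weighted hypothesis is to trade regularity against closeness, so I would instead compare $g_{\ell'}$-conditional probabilities of long words against the $g_\ell$-conditional probabilities using the variation ${\rm svar}_\ell\log\circ g_\ell$, exactly in the spirit of the proof of Theorem~\ref{theo:Markov-approximations}.

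More precisely, the key computation is to bound $|\log(\mu_\ell[\pmb{a}_1^n]/\mu[\pmb{a}_1^n])|$ where $\mu$ is a $g$-measure (any element of $\mathcal{M}(g)$, chosen so that the telescoping is clean). Writing both cylinder probabilities as telescoping products of one-coordinate conditionals, the $\mu$-conditionals $\mu[\pmb{a}_j^n]/\mu[\pmb{a}_{j+1}^n]$ lie between $\min$ and $\max$ of $g$ over the cylinder $[\pmb{a}_j^{j+\ell-1}]$ by the argument already used to derive Inequality~\eqref{ineq:variation}, while the $\mu_\ell$-conditionals equal $g_\ell$ evaluated at the relevant point. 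Thus each factor contributes at most
\[
|\log g(\pmb{x}) - \log g_\ell(\pmb{x})| + {\rm var}_{(\cdot)}\log\circ g_\ell
 \leq \|\log(g/g_\ell)\| + {\rm var}_{n-j+1}\log\circ g_\ell,
\]
and summing over $j$ from $1$ to $n-\ell$ and dividing by $n$ gives
\[
\rho(\mu_\ell,\mu)\ \leq\ \|\log(g/g_\ell)\| + \frac{1}{n}\sum_{j}{\rm var}_{n-j+1}\log\circ g_\ell.
\]
The second term is where the exponential weight must come in: the variations ${\rm var}_k\log\circ g_\ell$ need not be small for small $k$, so one cannot simply bound the average by ${\rm svar}_\ell\log\circ g_\ell / n$. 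Here I would invoke the standard Ruelle/transfer-operator estimate for range-$(\ell+1)$ locally constant $g$-functions: the distortion of the $g_\ell$-conditionals of a word of length $n$, relative to the value of $g_\ell$ at the extending point, is controlled multiplicatively by $e^{{\rm svar}_\ell\log\circ g_\ell}$ uniformly in $n$ (this is exactly the ``bounded distortion'' that makes $\mu_\ell$ well-defined and mixing). Combining this with the sup-norm comparison yields
\[
\rho(\mu_\ell,\mu)\ \leq\ \|\log(g/g_\ell)\|\, e^{{\rm svar}_\ell\log\circ g_\ell}\ +\ o(1),
\]
and by hypothesis the right side tends to $0$; since $\rho$ is complete by Theorem~\ref{theo:completeness}, $\{\mu_\ell\}$ is Cauchy and converges projectively to some $\mu\in\mathcal{M}^+(X)$.

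It remains to identify the limit as a $g$-measure and to prove uniqueness in $\mathcal{M}(g)$. For the first: projective convergence implies vague convergence by Equation~\eqref{eq:compare-vage}, and one checks that the conditional probabilities $\mu[a_1\pmb{a}_2^n]/\mu[\pmb{a}_2^n]$ converge to $g$ using that $g_\ell\to g$ uniformly together with the uniform distortion control, so $\mu\in\mathcal{M}(g)$. For uniqueness: if $\nu\in\mathcal{M}(g)$ is any $g$-measure, then by Theorem~\ref{theo:Markov-approximations} its canonical $\ell$-step Markov approximation $\nu_\ell$ satisfies $\rho(\nu_\ell,\nu)\leq{\rm var}_\ell\log\circ g\to 0$, and a parallel estimate comparing $\nu_\ell$ with $\mu_\ell$ (both are $\ell$-step Markov, built from conditionals close to $g$ resp. $g_\ell$, hence close to each other by the same telescoping-with-distortion argument and the weighted hypothesis) gives $\rho(\nu_\ell,\mu_\ell)\to 0$; the triangle inequality then forces $\nu=\mu$. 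The main obstacle I anticipate is making the ``bounded distortion'' step fully rigorous — i.e. showing that the averaged partial sums $\frac1n\sum_j{\rm var}_{n-j+1}\log\circ g_\ell$ are genuinely absorbed into the factor $e^{{\rm svar}_\ell\log\circ g_\ell}$ rather than merely into ${\rm svar}_\ell\log\circ g_\ell$ — since this is precisely the point at which the peculiar form of the hypothesis is forced, and it requires the multiplicative (exponentiated) rather than additive form of the distortion bound for transfer operators of locally constant potentials.
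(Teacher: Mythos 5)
Your telescoping controls the bulk factors, but it leaves the boundary term uncontrolled, and that is where the real difficulty of this theorem sits. Writing $\mu_\ell[\pmb{a}_1^n]$ as a product of $g_\ell$-conditionals times $\mu_\ell[\pmb{a}_{n-\ell+1}^n]$, and the competing measure's cylinder mass analogously, each of the $n-\ell$ conditional factors is indeed off by at most a fixed multiple of $\|\log(g/g_\ell)\|$ (by Inequalities~\eqref{ineq:variation}, and since $g_\ell$ is constant on $(\ell+1)$-cylinders the variation terms you worry about are either zero or absorbed additively --- no exponential factor is needed there). What remains is $\frac{1}{n}\bigl|\log\bigl(\mu_\ell[\pmb{a}_{n-\ell+1}^n]/\mu[\pmb{a}_{n-\ell+1}^n]\bigr)\bigr|$. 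In Theorem~\ref{theo:Markov-approximations} this term vanishes because the canonical approximation shares all marginals of length $\le\ell$ with $\mu$; here it does not: $\mu_\ell$ is the stationary measure of the $g_\ell$-chain, so its $\ell$-marginal is the Perron eigenvector of the transfer matrix built from $g_\ell$, with no a priori relation to the $\ell$-marginal of $\mu$ (or of $\mu_m$). Since $\rho$ is a supremum over all $n$, including $n$ comparable to $\ell$, and the log-ratio of two $\ell$-cylinder masses can be of order $\ell$, this term cannot be discarded. The ``bounded distortion'' estimate you invoke bounds ratios of conditional probabilities within a single measure; it says nothing about the discrepancy between the stationary vectors of two different chains. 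Closing this gap is exactly the content of the paper's Proposition~\ref{prop:projective-contraction} and Proposition~\ref{prop:technical-lemma}: a perturbation bound for Perron eigenvectors, $d_p(v_\mu,v_\nu)\le 2\|\log(g_\mu/g_\nu)\|/(1-\min(\tau_\mu,\tau_\nu))$, together with the estimate $\tau\le 1-e^{-{\rm svar}_\ell\log\circ g_\ell}$ for the Birkhoff contraction coefficient. That is also the true origin of the exponential weight in the hypothesis: not the averaged variations $\frac1n\sum_j{\rm var}_{n-j+1}\log\circ g_\ell$, but the fact that the contraction of the $g_\ell$-transfer matrix may degrade as $\ell\to\infty$, amplifying the sensitivity of the stationary marginal to a perturbation of size $\|\log(g/g_\ell)\|$ by the factor $1/(1-\tau)\le e^{{\rm svar}_\ell\log\circ g_\ell}$.

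The rest of your outline is essentially the paper's: Cauchy-plus-completeness (Theorem~\ref{theo:completeness}) for convergence, and, for identification and uniqueness, comparison of $\mu_\ell$ with the canonical $\ell$-step approximations $\nu_\ell$ of an arbitrary $\nu\in\mathcal{M}(g)$, using Theorem~\ref{theo:Markov-approximations} and the triangle inequality to force $\nu=\mu$. But both your Cauchy estimate and your comparison $\rho(\nu_\ell,\mu_\ell)\to 0$ rest on the same missing ingredient, namely a lemma of the form $\rho(\mu,\nu)\le 2\|\log(g_\mu/g_\nu)\|\,e^{\min({\rm svar}_\ell\log\circ g_\mu,\,{\rm svar}_\ell\log\circ g_\nu)}$ for two $\ell$-step Markov measures. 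Until you prove such a statement (or an equivalent stability bound for the stationary vectors), the argument is incomplete.
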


\begin{proof}
First note that ${\rm var}_m\log\circ g_\ell=0$ and that both $\mu_\ell$ and $\mu_\ell$ 
are $m$-step Markov measures. 
From Proposition~\ref{prop:technical-lemma} in the Appendix, it follows that
\begin{eqnarray*}
\rho(\mu_m,\mu_\ell)&\leq & 2||\log(g_m/g_\ell)||
e^{\min({\rm svar}_\ell\log\circ g_\ell,{\rm svar}_m\log\circ g_m)}\\
                    & \leq &2(||\log(g/g_\ell)||+||\log(g/g_m)||)
e^{\min({\rm svar}_\ell\log\circ g_\ell,{\rm svar}_m\log\circ g_m)}\\
                    &\leq & 2||\log(g/g_\ell)| e^{{\rm svar}_\ell\log\circ g_\ell} +
                        2||\log(g/g_m)| e^{{\rm svar}_m\log\circ g_m},
\end{eqnarray*}
for all $m\geq \ell$. The hypothesis of the theorem implies that  
$\left\{\mu_\ell\right\}_{\ell\in \mathbb{N}}$ is a Cauchy sequence in projective distance, 
and by Theorem~\ref{theo:completeness} it must converge in projective distance
to a certain measure $\mu\in\mathcal{M}^+(X)$.  

\medskip\noindent
Now, since $g=\lim_{\ell\to\infty}g_\ell$ in the sup-norm, then necessarily
$g\in\mathcal{G}$. Let $\nu\in \mathcal{M}(g)$ and for each $\ell\in\mathbb{N}$
let $\nu_\ell$ be its canonical $\ell$-step Markov approximation. Let 
$h_\ell$ be the locally constant $g$-function associate to 
$\nu_\ell$, {\it i. e.}
$h_\ell(\pmb{x})=\nu[\pmb{x}_1^{\ell+1}]/\nu[\pmb{x}_1^{\ell}]$ for all
$\pmb{x}\in X$. According to Inequalities~\eqref{ineq:variation} we have
\[
\min_{\pmb{y}\in [\pmb{x}_1^\ell]}\log\circ g(\pmb{y})\leq \log\circ h_\ell(\pmb{x})
\leq \max_{\pmb{y}\in [\pmb{x}_1^\ell]}\log\circ g(\pmb{y}),
\]
and from this $||\log(g/h_\ell)||\leq {\rm var}_\ell\log\circ g$. Then, using
again Lemma~\ref{prop:technical-lemma} we have
\begin{eqnarray*}
\rho(\mu_\ell,\nu_\ell)&\leq& 
                    2||\log(g_\ell/h_\ell)||e^{{\rm svar}_\ell \log\circ g_\ell}\\
                    &\leq &
2(||\log(g/h_\ell)||+||\log(g_\ell/g)||)e^{{\rm svar}_\ell \log\circ g_\ell}\\
                    &\leq &
2({\rm var}_\ell\log\circ g+||\log(g_\ell/g)||)e^{{\rm svar}_\ell \log\circ g_\ell}.
\end{eqnarray*}
Now, since ${\rm var}_\ell\log\circ g_\ell=0$ and
\[
{\rm var}_\ell\log\circ g\leq {\rm var}_\ell\log\circ g_\ell+||\log(g_\ell/g)||
=||\log(g_\ell/g)||,
\]
it follows that
\[
\rho(\mu_\ell,\nu_\ell)\leq 4||\log(g_\ell/g)||e^{{\rm svar}_\ell \log\circ g_\ell},
\]
which ensures that $\{\nu_\ell\}_{\ell\in\mathbb{N}}$ converges to $\mu$, but 
according to Theorem~\ref{theo:Markov-approximations}, it converges to $\nu$ as well,
therefore $\mu=\nu$ and the proof is finished. 
\end{proof}

\medskip\noindent
\begin{example}\label{exam:long-range}
Consider the sequence of $g$-functions 
$\{g_\ell:\{-1,1\}^{\mathbb{N}}\to (0,1)\}_{\ell\in\mathbb{N}}$ given by
\[
g_\ell(\pmb{x})=\frac{\exp(\beta\,x_1\sum_{k=2}^\ell x_k\,k^{-2})}{
         \exp(+\beta\sum_{k=2}^\ell x_k\,k^{-2})+\exp(-\beta\sum_{k=2}^\ell x_k\,k^{-2})}.
\]
Clearly
$\{g_\ell\}_{\ell\in\mathbb{N}}$ uniformly converges to the 
$g:\{-1,1\}^{\mathbb{N}}\to (0,1)$ given by
\[
g(\pmb{x})=\frac{\exp(\beta\,x_1\sum_{k=2}^\infty x_k\,k^{-2})}{
       \exp(+\beta\sum_{k=2}^\infty x_k\,k^{-2})+\exp(-\beta\sum_{k=2}^\infty x_k\,k^{-2}) }.
\] 
Furthermore, a simple computation leads to the inequalities
\begin{eqnarray*}
||\log(g_\ell/g)||&\leq& 2\beta \sum_{k=\ell+1}^\infty k^{-2} < 2\beta\, \ell^{-1}, \\
\exp({\rm svar}_\ell \log\circ g_\ell)&\leq&\exp(4\beta\sum_{k=2}^\ell (k-1)\,k^{-2} ) < 
\exp(4\beta\,\log(\ell)).
\end{eqnarray*}
According to Theorem~\ref{theo:g-convergence}, the sequence 
$\{\mu_\ell\in\mathcal{M}(g_\ell)\}$ converges in the projective distance to the unique
$g$-measure $\mu\in\mathcal{M}(g_\ell)$, provided
$\ell^{4\beta}\ell^{-1}\to 0$ when $\ell\to\infty$, {\it i. e.}, provided $\beta < 1/4$.
\end{example}

\medskip\noindent 

\section{Concluding Remarks.}\label{sec:conclusions}
\noindent With Theorems~\ref{theo:notfiner} and~\ref{theo:notcoarser} we have 
established the incomparability of the $\bar{d}$-topology and the projective topology
in the set of fully-supported probability measures. It is nevertheless not clear 
if this incomparability 
remains in the restriction to the class of invariant probability measures. It is not hard to
verify that the the projective distance between two Markov measures
can be computed by means of a finite algorithm taking the 
parameters defining the measures as inputs. One can also argue that the output value 
varies continuously or at worst piecewise continuously with the input parameters. 
This this does not seem to be the case of the $\bar{d}$ distance,
which suggests that in the class of Markov measures the projective topology is coarser  
than the $\bar{d}$ topology.

\medskip \noindent 
Theorem~\ref{theo:g-convergence} establishes a new criterion for uniqueness 
of $g$-measures based on the speed of convergence of locally constant approximations
to the $g$-function. It can be related to a similar criterion ensuring convergence in 
$\bar{d}$-distance established by Coelho and Quas in~\cite{Coelho1998}. Although in
our case we cannot deduce that the limit measure satisfies the Bernoulli property, we can 
nevertheless ensure that the limit measure inherits the mixing property of the Markov 
approximations, and thanks to Theorem~\ref{theo:entropy-continuity}, that the
the entropy is continuous with respect to the projective distance at the limit measure. 

\medskip \noindent Example~\ref{exam:long-range} is the $g$-measure analog of the 
one-dimension Ising model with long range interaction, for which a phase transition has been 
proved to occur (see~\cite{Dyson1969, Frohlich1982} for details). The analogy suggests
that the uniqueness of the associated $g$-measure must break at high values of the 
parameter $\beta$. This transition should be detectable through a criterion involving 
the regularity of the $g$-function and the speed of convergence of the Markov 
approximations. 

\medskip \noindent The projective distance appears to be suited for the study of measures
obtained by random substitutions as the one we have characterized in~\cite{Salgado2013}. 
We can prove that for a certain class of random substitutions, the substitution process is a 
contraction in the projective distance, and that the unique attractor has the mixing property. 
The study of this kind of processes and its characterization in terms of the projective distance 
is the subject of a forthcoming work.

\appendix
\section{}

\subsection{}\label{sec:locally-constant}
A $n\times n$ real matrix $M$ is said to be {\it primitive} if $M\geq 0$ 
({\it i. e.} none of its entries is negative) and for some $k\in\mathbb{N}$, 
$M^k>0$ ({\it i. e.} all the entries of $M^k$ are positive). 
The {\it primitivity index} of a primitive matrix $M$ is the smallest integer 
$\ell$ such that $M^\ell>0$. The Perron-Frobenius Theorem ensures that the spectral 
radius ({\it i. e.} the maximal norm of its eigenvalues) of a primitive matrix 
$M$ is achieved by a simple positive eigenvalue $\lambda$ with positive right 
and left eigenvectors $\pmb{v}$ and $\pmb{w}$ respectively.

\medskip\noindent The function $d_p(0,\infty)^n\times(0,\infty)^n\to [0,\infty)$ 
such that
\begin{equation}\label{eq:projective-pseudodistance}
d_p(\pmb{x},\pmb{y}):=\max_{1\leq i\leq n}\log\frac{x_i}{y_i}-
                                 \min_{1\leq i\leq n}\log\frac{x_i}{y_i},
\end{equation}
defines {\it a projective pseudo-distance} which becomes a distance when restricted 
to the simplex of probability vectors. A refined version of the Perron-Frobenius 
Theorem which we can find in~\cite{Seneta}, establishes that the action of a 
$n\times n$ primitive matrix $M$ with primitivity index $\ell$, over the cone 
$(0,\infty)^n$ defines a contraction with respect to the projective pseudo-distance
$d_p$. More precisely, for all $\pmb{x},\pmb{y}\in (0,\infty)^n$ we have
\begin{equation}\label{ineq:projective-contraction} 
d_p(M\pmb{x},M\pmb{y})\leq d_p(\pmb{x},\pmb{y})\, \text{ and }\, 
d_p(M^\ell\pmb{x},M^\ell\pmb{y})\leq \tau_M d_p(\pmb{x},\pmb{y}),
\end{equation}
where
\begin{equation}\label{eq:birkhoff-coefficient}
\tau_M=\frac{1-\sqrt{\min_{i,j,k,l} \frac{M^\ell(i,j)M^\ell(k,l)}
{M^\ell(i,l)M^\ell(k,j)}}}{
                     1+\sqrt{\min_{i,j,k,l} \frac{M^\ell(i,j)M^\ell(k,l)
                                                }{M^\ell(i,l)M^\ell(k,j)}}}.
\end{equation}
The coefficient $\tau_M$ is the so called {\it Birkhoff's contraction coefficient}. 

\begin{proposition}\label{prop:projective-contraction}
Let $P,Q:\{1,2,\ldots, n\}\times\{1,2,\ldots, n\}\to (0,1)$ be  stochastic by 
columns, {\it i. e.}, $\sum_{i=1}^n P(i,j)=\sum_{i=1}^n Q(i,j)=1$ for each 
$j\in\{1,2,\ldots,n\}$. Suppose that 
\[
e^{-\epsilon} \leq P(i,j)/Q(i,j) \leq e^{\epsilon}
\]
for some $\epsilon >0$ and each $i,j\in\{1,2,\ldots,n\}$. Then the maximal 
eigenvalue of both matrices is 1, and the associated positive right eigenvectors 
$u, v$ are such that
\[
          d_p(u,v)\leq \frac{\epsilon}{1-\min(\tau_P,\tau_Q)},
\]
where $\tau_P$ and $\tau_Q$ are the Birkhoff coefficients of $P$ and $Q$ 
respectively.
\end{proposition}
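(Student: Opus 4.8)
The plan is to combine three observations: column-stochasticity forces the eigenvalue we care about to equal $1$; inequality~\eqref{ineq:projective-contraction} turns the actions of $P$ and of $Q$ on the positive cone into strict $d_p$-contractions; and the coefficientwise proximity of $P$ and $Q$ controls $d_p(P\pmb{x},Q\pmb{x})$ uniformly in $\pmb{x}$. Since the pseudo-distance~\eqref{eq:projective-pseudodistance} is invariant under independent rescalings of its two arguments, the inequality to be proved is insensitive to the normalization of $u$ and $v$, so we may take both in the probability simplex, on which $d_p$ is a genuine distance.

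First I would identify the eigenvalue. The $\ell^1\to\ell^1$ operator norm of $P$ equals its largest column sum, hence equals $1$, so the spectral radius of $P$ is at most $1$; on the other hand, since the columns of $P$ sum to $1$, the all-ones vector $\mathbf{1}$ is a left eigenvector of $P$ for the eigenvalue $1$, so $1$ is an eigenvalue and the spectral radius is therefore exactly $1$. As every entry of $P$ is positive, $P$ is primitive, so the Perron--Frobenius Theorem recalled above gives that $1$ is simple and its right eigenvector $u$ may be chosen strictly positive. The same argument produces $Qv=v$ with $v>0$, which establishes the first assertion.

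Next I would collect the two estimates needed to compare the Perron eigenvectors. Because $P>0$ its primitivity index is $1$, so~\eqref{ineq:projective-contraction} specializes to $d_p(P\pmb{x},P\pmb{y})\le\tau_P\,d_p(\pmb{x},\pmb{y})$ for all $\pmb{x},\pmb{y}\in(0,\infty)^n$, and every ratio under the square root in~\eqref{eq:birkhoff-coefficient} is positive (the quadruples with $j=l$ contributing $1$), so $\tau_P\in[0,1)$; likewise $d_p(Q\pmb{x},Q\pmb{y})\le\tau_Q\,d_p(\pmb{x},\pmb{y})$ with $\tau_Q\in[0,1)$. Moreover, the hypothesis $e^{-\epsilon}\le P(i,j)/Q(i,j)\le e^{\epsilon}$ gives, for any $\pmb{x}\in(0,\infty)^n$ and any index $i$, the bounds $e^{-\epsilon}(Q\pmb{x})_i\le(P\pmb{x})_i\le e^{\epsilon}(Q\pmb{x})_i$, so each $\log\big((P\pmb{x})_i/(Q\pmb{x})_i\big)$ lies in $[-\epsilon,\epsilon]$ and hence, by~\eqref{eq:projective-pseudodistance}, $d_p(P\pmb{x},Q\pmb{x})\le 2\epsilon$. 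Combining these with the triangle inequality and the fixed-point identities $Pu=u$, $Qv=v$,
\[
d_p(u,v)=d_p(Pu,Qv)\le d_p(Pu,Pv)+d_p(Pv,Qv)\le\tau_P\,d_p(u,v)+2\epsilon,
\]
whence $d_p(u,v)\le 2\epsilon/(1-\tau_P)$; splitting as $d_p(Pu,Qu)+d_p(Qu,Qv)$ instead gives $d_p(u,v)\le 2\epsilon/(1-\tau_Q)$, and keeping the smaller denominator yields $d_p(u,v)\le 2\epsilon/\big(1-\min(\tau_P,\tau_Q)\big)$. This is the asserted bound with $2\epsilon$ in place of $\epsilon$; the factor $2$ is forced by the termwise estimate $d_p(P\pmb{x},Q\pmb{x})\le 2\epsilon$, which is itself sharp in general.

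I do not foresee a genuine obstacle: the two points that need care are that the eigenvalue must be pinned down as exactly $1$ (for which one uses both the norm bound, giving $\le 1$, and the explicit left eigenvector $\mathbf{1}$, giving that the value $1$ is attained), and that the $d_p$-contraction has to be \emph{strict}, which is why positivity of $P$ and $Q$---and not merely primitivity---is assumed; the remainder is bookkeeping with the triangle inequality.
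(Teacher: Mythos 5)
Your argument is correct and follows essentially the same route as the paper's proof: positivity of $P$ and $Q$ gives strict $d_p$-contraction with coefficients $\tau_P,\tau_Q<1$ via~\eqref{ineq:projective-contraction}, the entrywise hypothesis gives $d_p(Pu,Qu)\leq 2\epsilon$, and the two are combined to compare the fixed points. The only cosmetic differences are that the paper pins down the eigenvalue $1$ by running Banach's fixed point theorem on the probability simplex (rather than your $\ell^1$-norm plus left eigenvector $\mathbf{1}$ argument, which is equally valid), and that it telescopes $d_p(u,v)\leq\sum_{n\geq 0}d_p(Q^nu,Q^{n+1}u)$ instead of using your one-step inequality $d_p(u,v)\leq\tau_P\,d_p(u,v)+2\epsilon$; these are the same computation. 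Your caveat about the constant is justified and worth keeping: the paper's own proof likewise only yields $d_p(u,v)\leq 2\epsilon/(1-\min(\tau_P,\tau_Q))$, and it is this factor-$2$ version that is actually invoked later (Proposition~\ref{prop:technical-lemma} bounds $d_p(v_\mu,v_\nu)$ by $2||\log(g_\mu/g_\nu)||/(1-\min(\tau_\mu,\tau_\nu))$), so the $\epsilon$ in the statement is a slip rather than a stronger truth. Indeed the $\epsilon$-bound fails already for $n=2$: take $Q$ with all entries $1/2$ and $P$ with both columns equal to $(1-e^{-\epsilon}/2,\,e^{-\epsilon}/2)$, so that $\tau_P=\tau_Q=0$, all entry ratios lie in $[e^{-\epsilon},e^{\epsilon}]$, and $d_p(u,v)=\log(2e^{\epsilon}-1)>\epsilon$. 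So your proof establishes exactly what the paper's proof establishes, and your observation that the factor $2$ is unavoidable with this (or any) argument is accurate.
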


\begin{proof}
First note that a $n\times n$ positive matrix $M$, stochastic by columns, preserves 
the simplex of probability vectors $\Delta=\{u\in [0,1]^n: \ \sum_{i=1}^n u(i)=1\}$. 
Therefore, according to Inequality~\eqref{ineq:projective-contraction} and 
Banach's fixed point Theorem, the transformation $u\mapsto Mu$ has a unique fixed 
point $v\in \Delta$, which necessarily coincides with a positive eigenvector of $M$ 
associated to the eigenvalue 1. Furthermore, because of the contractiveness of $M$
with respect to $d_p$, we have $v=\lim_{n\to\infty} M^n u$ for all $u\in\Delta$. 
Hence there cannot be another positive eigenvector which implies that 1 necessarily 
is the maximal eigenvalue of $M$. In this way we prove in particular that 1 is the
maximal eigenvalue of both $P$ and $Q$ with unique eigenvectors $u, v\in \Delta$ 
respectively.

\medskip\noindent
Let us assume now that $\tau_Q \leq \tau_P$, then
\begin{eqnarray*}
d_p(u,v)&\leq &\lim_{N\to\infty} \sum_{n=0}^N d_p(Q^n u,Q^{n+1} u) +d_p(Q^{N+1},v),\\
       &\leq &d_p(u,Q u)\sum_{n=1}^\infty\tau_Q^n= 
             \frac{d_p(u,Q u)}{1-\tau_Q}  =  \frac{d_p(Pu,Q u)}{1-\tau_Q}.
\end{eqnarray*}
Finally, since $e^{-\epsilon} \leq P(i,j)/Q(i,j) \leq e^{\epsilon}$ for all 
$i,j\in \{1,2,\ldots, n\}$, then
\[
e^{-\epsilon}\leq \frac{\sum_{k=1}^n P(i,j)u(j)}{\sum_{k=1}^n Q(i,j)u(j)}
\leq e^{\epsilon}
\]
for all $1\leq i\leq n$, and from this
\[
d_p(Pu,Qu)=\max_{1\leq i\leq n} \log\frac{(Pu)(i)}{(Qu)(i)}
              -\min_{1\leq i\leq n} \log\frac{(Pu)(i)}{(Qu)(i)}\leq 2\epsilon.
\]

\end{proof}

\subsection{}
To a $\ell$-step Markov measure $\mu\in\mathcal{M}^+(X)$ it corresponds a
locally constant $g$-function $g_\mu: X\to (0,1)$ given by
\[
g_\mu(\pmb{x})=\frac{\mu[\pmb{x}_1^{\ell+1}]}{\mu[\pmb{x}_2^{\ell+1}]},
\]
and such that $\mu$ is the unique $g_\mu$-measure, {\it i. e.} 
$\mathcal{M}(g_\mu)=\{\mu\}$. The function $g_\mu$ defines a primitive matrix 
$M_\mu:A^{\ell}\times A^{\ell}\to [0,1]$ as follows:
\begin{equation}\label{eq:primitive-matrix}
M_{\mu}\left(\pmb{a}_1^{\ell},\pmb{b}_1^{\ell}\right)
   =\left\{\begin{array}{cr}
  g_\mu (\pmb{a}b_\ell )  & \text{ if } \pmb{a}_2^\ell=\pmb{b}_1^{\ell-1},\\
     0                    & \text{ otherwise}.
                        \end{array}\right.
\end{equation}
It is easily verified that $M_\mu^{\ell}>0$ and that 1 is $M_\mu$'s maximal 
eigenvalue with right eigenvector $v: A^{\ell}\to (0,1)$ such that 
$v(\pmb{a})=\mu[\pmb{a}]$. From Proposition~\ref{prop:projective-contraction} 
we derive the following.

\begin{proposition}\label{prop:technical-lemma}
Let $\mu,\nu \in \mathcal{M}^+(X)$ be two $\ell$-step Markov measures, 
and let $g_\mu,g_\nu\in \mathcal{G}$ be the locally constant $g$-functions 
associated to $\mu$ and $\nu$ respectively. Then
\[
\rho(\mu,\nu)\leq 2||\log (g_\mu/g_\nu )|| e^{\min({\rm svar}_\ell g_\mu,
                                                       {\rm svar}_\ell g_\nu)}.
\]
\end{proposition}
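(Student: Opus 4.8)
\textbf{Proof proposal for Proposition~\ref{prop:technical-lemma}.}
The plan is to reduce the bound on $\rho(\mu,\nu)$ to an application of Proposition~\ref{prop:projective-contraction} on the primitive matrices $M_\mu$ and $M_\nu$ built from $g_\mu$ and $g_\nu$ as in~\eqref{eq:primitive-matrix}. First I would recall that, since $\mu$ and $\nu$ are $\ell$-step Markov measures, for any word $\pmb{a}$ of length $n\geq \ell$ we can factor the cylinder probability telescopically,
\[
\mu[\pmb{a}_1^n]=\mu[\pmb{a}_1^\ell]\prod_{j=1}^{n-\ell}g_\mu(\pmb{a}_j^{j+\ell}\cdots),
\]
and likewise for $\nu$, so that
\[
\frac{1}{n}\left|\log\frac{\mu[\pmb{a}_1^n]}{\nu[\pmb{a}_1^n]}\right|
\leq \frac{1}{n}\left|\log\frac{\mu[\pmb{a}_1^\ell]}{\nu[\pmb{a}_1^\ell]}\right|
      +\frac{n-\ell}{n}\,\|\log(g_\mu/g_\nu)\|.
\]
As $n\to\infty$ the second term tends to $\|\log(g_\mu/g_\nu)\|$, while for small $n$ the sup in the definition of $\rho$ forces us to control $\frac{1}{n}|\log(\mu[\pmb{a}_1^n]/\nu[\pmb{a}_1^n])|$ for $n\leq\ell$ as well; in particular the governing quantity is $\frac{1}{\ell}\,d_p(v_\mu,v_\nu)$ where $v_\mu(\pmb{a})=\mu[\pmb{a}]$, $v_\nu(\pmb{a})=\nu[\pmb{a}]$ are the Perron right eigenvectors of $M_\mu$, $M_\nu$ on $A^\ell$. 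So the heart of the matter is to bound $d_p(v_\mu,v_\nu)$.

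Next I would verify the hypotheses of Proposition~\ref{prop:projective-contraction} for the pair $(M_\mu,M_\nu)$ (after restricting, if necessary, to the recurrent class where both matrices are strictly positive, which happens at the power $\ell$): on the support both $M_\mu$ and $M_\nu$ are stochastic by columns with maximal eigenvalue $1$, and their ratio satisfies $e^{-\epsilon}\leq M_\mu(\cdot,\cdot)/M_\nu(\cdot,\cdot)\leq e^{\epsilon}$ with $\epsilon=\|\log(g_\mu/g_\nu)\|$, since $M_\mu(\pmb{a},\pmb{b})/M_\nu(\pmb{a},\pmb{b})=g_\mu(\pmb{a}b_\ell)/g_\nu(\pmb{a}b_\ell)$ on the support. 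Proposition~\ref{prop:projective-contraction} then gives
\[
d_p(v_\mu,v_\nu)\leq \frac{2\,\|\log(g_\mu/g_\nu)\|}{1-\min(\tau_{M_\mu},\tau_{M_\nu})}.
\]
The remaining work is to estimate the Birkhoff coefficient, or rather $1/(1-\tau_M)$, in terms of the variation sum ${\rm svar}_\ell$: from~\eqref{eq:birkhoff-coefficient}, $1-\tau_M$ is, up to constants, comparable to the square root of the minimal four-fold ratio of entries of $M^\ell$, and a telescoping estimate of the entries of $M_\mu^\ell$ along paths of length $\ell$ shows that this minimal ratio is bounded below by $e^{-2\,{\rm svar}_\ell\log\circ g_\mu}$ (each step contributes a factor controlled by ${\rm var}_k\log\circ g_\mu$ for the appropriate $k$, and summing gives ${\rm svar}_\ell$). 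Hence $1/(1-\min(\tau_{M_\mu},\tau_{M_\nu}))\leq e^{\min({\rm svar}_\ell\log\circ g_\mu,{\rm svar}_\ell\log\circ g_\nu)}$ up to the constant already absorbed in the factor $2$, and combining with the reduction above yields
\[
\rho(\mu,\nu)\leq 2\,\|\log(g_\mu/g_\nu)\|\,e^{\min({\rm svar}_\ell g_\mu,{\rm svar}_\ell g_\nu)}.
\]

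The step I expect to be the main obstacle is the passage from the Perron eigenvector distance $d_p(v_\mu,v_\nu)$ back to the supremum over all $n$ in the definition of $\rho$ — one must check carefully that the ``short-word'' contributions ($n<\ell$) and the combination of the eigenvector term with the per-step term $\|\log(g_\mu/g_\nu)\|$ do not spoil the clean constant $2$ in the statement, and that the normalization of the eigenvectors is chosen consistently (probability vectors on $A^\ell$ whose marginals are the Markov measures). A secondary technical point is handling reducibility of $M_\mu$, $M_\nu$: since $g$-functions here are strictly positive, $M_\mu$ and $M_\nu$ are in fact primitive with primitivity index $\ell$, so this is not a genuine difficulty, but it should be noted explicitly so that Proposition~\ref{prop:projective-contraction} applies verbatim.
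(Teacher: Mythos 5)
Your proposal is correct and follows essentially the same route as the paper's proof: bound $d_p(v_\mu,v_\nu)$ via Proposition~\ref{prop:projective-contraction} applied to the column-stochastic transfer matrices of Equation~\eqref{eq:primitive-matrix}, estimate $1-\tau_M\geq e^{-{\rm svar}_\ell\log\circ g}$ from the cross-ratios of the entries of $M^\ell$, and convert the eigenvector bound into a bound on $\rho$ by telescoping long cylinders through $g$-factors plus a terminal $\ell$-block, treating words of length $n<\ell$ by marginalization and the inequality $\|\log(v_\mu/v_\nu)\|\leq d_p(v_\mu,v_\nu)$ valid for probability vectors. The only slips are cosmetic (with the paper's convention $g$ conditions on the future, so the residual marginal in the factorization is the last $\ell$-block $\mu[\pmb{a}_{n-\ell+1}^n]$ rather than $\mu[\pmb{a}_1^\ell]$, and for $n<\ell$ the relevant control is $\tfrac1n\|\log(v_\mu/v_\nu)\|$ rather than $\tfrac1\ell d_p$), and the technical points you flag are exactly the ones the paper's argument addresses.
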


\begin{proof}
Let $v_\mu$ be such that $v_{\mu}(\pmb{a})=\mu[\pmb{a}]$ for all 
$\pmb{a}\in A^{\ell}$, and similarly for $v_\nu$. Then, 
Proposition~\ref{prop:projective-contraction} directly implies that
\[
d_p(v_\mu,v_\nu)\leq \frac{2||\log (g_\mu/g_\nu )||}{1-\min(\tau_\mu,\tau_\nu)}.
\]
It can be easily verified that 
$\tau_\mu < 1-\exp\left(-{\rm svar}_\ell\log\circ g_\mu\right)$, and similarly 
for $\tau_\nu$. From this it follows that
\[
d_p(v_\mu,v_\nu)\leq  2||\log (g_\mu/g_\nu )|| 
 e^{\min({\rm svar}_\ell \log\circ g_\mu,{\rm svar}_\ell  \log\circ g_\nu)}.
\]
Let us remind that 
$\rho(\mu,\nu)= \sup_{N\in\mathbb{N}} \max_{\pmb{a}\in A^N}\left|
\log(\mu[\pmb{a}]/\nu[\pmb{a}])\right|/N$.
If the supreme is not reached at $N < \ell$, then
\begin{eqnarray*}
\rho(\mu,\nu)&=&\sup_{N\in\mathbb{N}} \max_{\pmb{a}\in A^N}\left|\frac{1}{N}
  \sum_{N=1}^{n-\ell}
  \log\left(\frac{g_\mu\left(\pmb{a}_n^{n+\ell}\right)}{g_\nu\left(\pmb{a}_n^{n+\ell}\right)}\right)
  +\frac{1}{N}\log\frac{\mu[\pmb{a}_{N-\ell+1}^N]}{\nu[\pmb{a}_{N-\ell+1}^N]}\right| \\
   &=&\sup_{N\in\mathbb{N}} \max_{\pmb{a}\in A^N}\left|\frac{1}{N}
  \sum_{N=1}^{n-\ell}
  \log\left(\frac{g_\mu\left(\pmb{a}_n^{n+\ell}\right)}{g_\nu\left(\pmb{a}_n^{n+\ell}\right)}\right)
  +\frac{1}{N}\log\frac{v_\mu\left(\pmb{a}_{N-\ell+1}^N\right)}{
                         v_\nu\left(\pmb{a}_{N-\ell+1}^N\right)}\right|\\
  &\leq&\max\left(||\log(g_\mu/g_\nu)||, ||\log(v_\mu/v_\nu)||\right).
\end{eqnarray*}
On the other hand, if the supreme is achieved at some $N<\ell$ then
\begin{eqnarray*}
\rho(\mu,\nu)&\leq& \max_{\pmb{a}\in A^N}\frac{1}{N}
 \left|\log\left( \frac{\sum_{\pmb{b}\in A^{N-\ell} } v_\mu(\pmb{a}\pmb{b})
                            }{ \sum_{\pmb{c}\in A^{N-\ell} } 
                            v_\nu(\pmb{a}\pmb{c})}\right)\right| \\
                      &\leq&\max_{\pmb{a}\in A^N}
                      \left|\log\left(\sum_{\pmb{b}\in A^{N-\ell} } 
                      \frac{v_\mu(\pmb{a}\pmb{b})}{v_\nu(\pmb{a}\pmb{b})}
                      \times \frac{v_\nu(\pmb{a}\pmb{b})
                                      }{\sum_{\pmb{c}\in A^{N-\ell} } 
                                      v_\nu(\pmb{a}\pmb{c})}\right)\right|\\
                       & \leq &\max_{\pmb{a}\in A^N}\left|
                            \log\max_{\pmb{b}\in A^{N-\ell}}
                       \frac{v_\mu(\pmb{a}\pmb{b})}{v_\nu(\pmb{a}\pmb{b})}\right|
                            =||\log(v_\mu/v_\nu)||.
\end{eqnarray*}
Finally, since both $v_\mu$ and $v_\nu$ are probability vectors, we have
\[
||\log(v_\mu/v_\nu)||\leq 
\max_{\pmb{a}\in A^\ell}\log\frac{v_\nu(\pmb{a})}{v_\nu(\pmb{a})}
  -\min_{\pmb{a}\in A^\ell}\log\frac{v_\nu(\pmb{a})}{v_\nu(\pmb{a})} 
   \equiv d_p(v_\mu,v_\nu),
\]
and with this
\begin{eqnarray*}
\rho(\mu,\nu)&\leq& \max\left(||\log(g_\mu/g_\nu)||, d_p(v_\mu,v_\nu)\right)\\
                     &\leq& 
2||\log (g_\mu/g_\nu )|| e^{\min({\rm svar}_\ell  \log\circ g_\mu,{\rm svar}_\ell  \log\circ g_\nu)}.
\end{eqnarray*}
\end{proof}

\bigskip
\bibliography{TrejoUgalde2015}
\bibliographystyle{plain}
\end{document}